\newtheorem{ut}{Theorem}
\newtheorem{up}[ut]{Proposition}
\newtheorem{ul}[ut]{Lemma}
\newtheorem{uc}[ut]{Corollary}
\newtheorem{ucl}[ut]{Claim}
\theoremstyle{remark}
\theoremstyle{definition}
\title{On a local property of fences and fans}
\author{David S. Lipham}
\date{}
\begin{document}

\maketitle

\begin{abstract}
Two closely related classes of topological spaces are fences and fans. A \textit{fence} is a compact metric space whose  connected components are  arcs and singletons. A \textit{fan} is a continuum formed by joining arcs at a common vertex, in such a way that intersections of subcontinua are always connected. We prove that every fence can be embedded in the plane and that both fences and fans admit a basis consisting of pierced open sets. This resolves a question  by Iztok Banič, Goran Erceg, Ivan Jelić, Judy Kennedy, and Van Nall.
\end{abstract}

\

\

\

\section{Introduction}

A \textbf{continuum} is a non-empty compact connected metric space.  

Following \cite{pans}, we define a continuum $X$ to be a \textbf{fan} if:
\begin{itemize}
\item[(1)] there exists $v\in X$  
 and a collection of arcs $\mathcal L$ such that $X=\bigcup \mathcal L$, and $L\cap L'=\{v\}$ for every two elements $L,L'\in\mathcal L$; and 
\item[(2)] for every two subcontinua $H$ and $K$ of $X$, the intersection  $H\cap K$ is connected.
\end{itemize}
If $X$ is a fan then any point $v$ of $X$ which satisfies (1) is necessarily unique and is called the \textbf{vertex} of $X$. Each fan is uniquely and hereditarily arcwise connected. Additionally, fans are known to be  $1$-dimensional. It is an open question as to whether every $1$-dimensional continuum that satisfies (1)  is a fan \cite[Problem 3.4]{pans}. The authors of \cite{pans} show that under certain conditions, one of which is the Jure property defined below, the question has a positive answer (cf. \cite[Corollary 4.29]{pans}).

A \textbf{fence} is a compact  metric space whose components are arcs and points \cite{bas}. It is easy to prove that each subcontinuum of a fan which misses the  vertex is either an arc or a singleton. Thus,

\begin{up}If $X$ is a fan and $V$ is any open set containing the vertex of $X$, then $X\setminus V$ is a fence.\end{up}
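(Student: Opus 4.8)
The plan is to deduce the statement from the observation, quoted just above, that every subcontinuum of $X$ missing the vertex is an arc or a singleton, after first reducing $X\setminus V$ to its components. Since $V$ is open and $X$ is compact, $X\setminus V$ is a closed subset of $X$, hence itself compact and metric, and it does not contain $v$. Each component $C$ of $X\setminus V$ is connected and, being closed in the compact space $X\setminus V$, is compact; thus $C$ is a (possibly degenerate) subcontinuum of $X$ with $v\notin C$. Granting the quoted fact, every such $C$ is therefore an arc or a point, and so $X\setminus V$ is a compact metric space all of whose components are arcs and points, i.e.\ a fence. (The only degenerate case, $V=X$, gives the empty fence.)

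The substance of the argument is thus the quoted fact, and I would prove it as follows. Let $K$ be a subcontinuum of $X$ with $v\notin K$; I may assume $K$ is nondegenerate. Because $X$ is hereditarily arcwise connected, $K$ is arcwise connected. I claim $K$ is contained in a single member of $\mathcal L$. Suppose not: then there are $x,y\in K$ lying on distinct arcs $L,L'\in\mathcal L$. Each of $L,L'$ contains $v$ (as $L\cap L'=\{v\}$), so within the arc $L$ there is a subarc from $x$ to $v$ and within $L'$ a subarc from $v$ to $y$; since $L\cap L'=\{v\}$, these two subarcs meet only at $v$, and their union is therefore an arc from $x$ to $y$. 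By \emph{unique} arcwise connectedness this union is the arc $[x,y]$ in $X$, whence $v\in[x,y]$. But $K$ is arcwise connected, so $[x,y]\subseteq K$ and hence $v\in K$, a contradiction. Therefore $K\subseteq L$ for some $L\in\mathcal L$, and being a subcontinuum of the arc $L$ it must be a subarc or a point.

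The step I expect to be the crux is the single-arc containment, and in particular the claim that the join at $v$ of a subarc of $L$ and a subarc of $L'$ is again an arc: this is where the defining condition $L\cap L'=\{v\}$ and unique arcwise connectedness do the real work, ruling out the branching at $v$ that would occur were the two subarcs to overlap further. The remaining points are routine: that hereditary arcwise connectedness gives $[x,y]\subseteq K$, that a nonempty closed connected subset of an arc is a point or an arc, and that components of the compact metric space $X\setminus V$ are closed and therefore compact. I would double-check the edge cases where $\mathcal L$ is a single arc (so $X$ is itself an arc) and where a component is a singleton, both of which fall immediately under the same conclusion.
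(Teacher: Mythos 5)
Your proposal is correct and follows exactly the route the paper intends: components of the compact set $X\setminus V$ are subcontinua of $X$ missing the vertex, and the paper's preceding sentence (``each subcontinuum of a fan which misses the vertex is either an arc or a singleton'') does the rest. The only difference is that the paper asserts that key fact as ``easy to prove'' while you supply its proof, correctly, via unique and hereditary arcwise connectedness together with the condition $L\cap L'=\{v\}$.
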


Here we are interested in a local property  having  to do with arcs that cross, as opposed to bend at,  neighborhood boundaries. Suppose that $X$ is a fence or a fan, and $U\subset X$ is open.   A point $x\in \partial U$  is a \textbf{piercing point} for $U$ if for each open set $V$  which contains $x$, and every arc $\alpha\subset X$ which goes through $x$ (so $x$ is not an endpoint of $\alpha$), $\alpha\cap V$ intersects both $U$ and $X\setminus \overline U$. This is easily seen to be equivalent to the definition in \cite{pans}. 
An   open set $U\subset X$  is \textbf{pierced} if every point of $\partial U$ is piercing.    Note that the boundary of any pierced open set is 0-dimensional, as every compact $1$-dimensional subset of $X$ contains an arc. 

\begin{figure}\centering\includegraphics[scale=0.245]{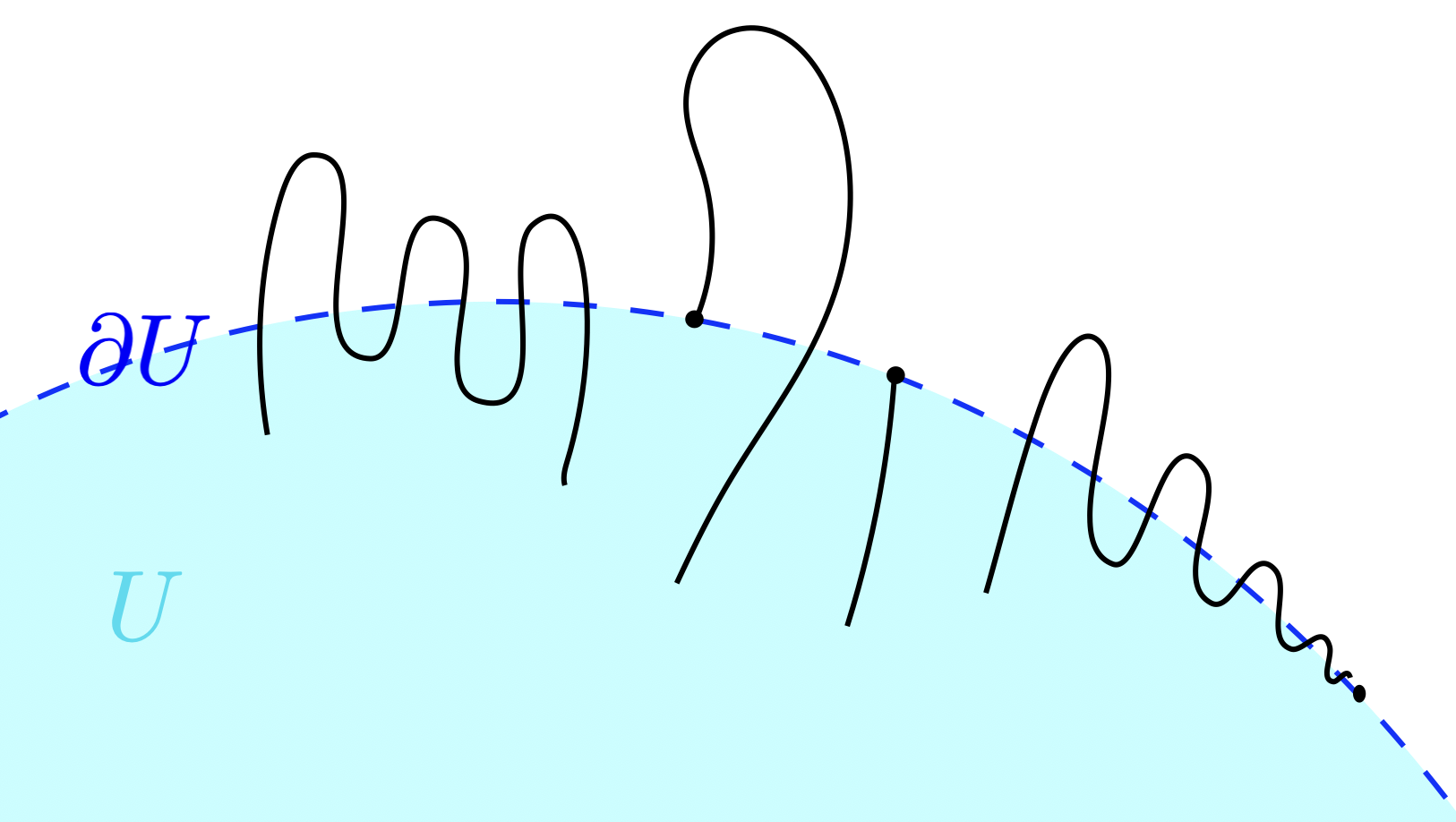}\hspace{1cm}\includegraphics[scale=0.245]{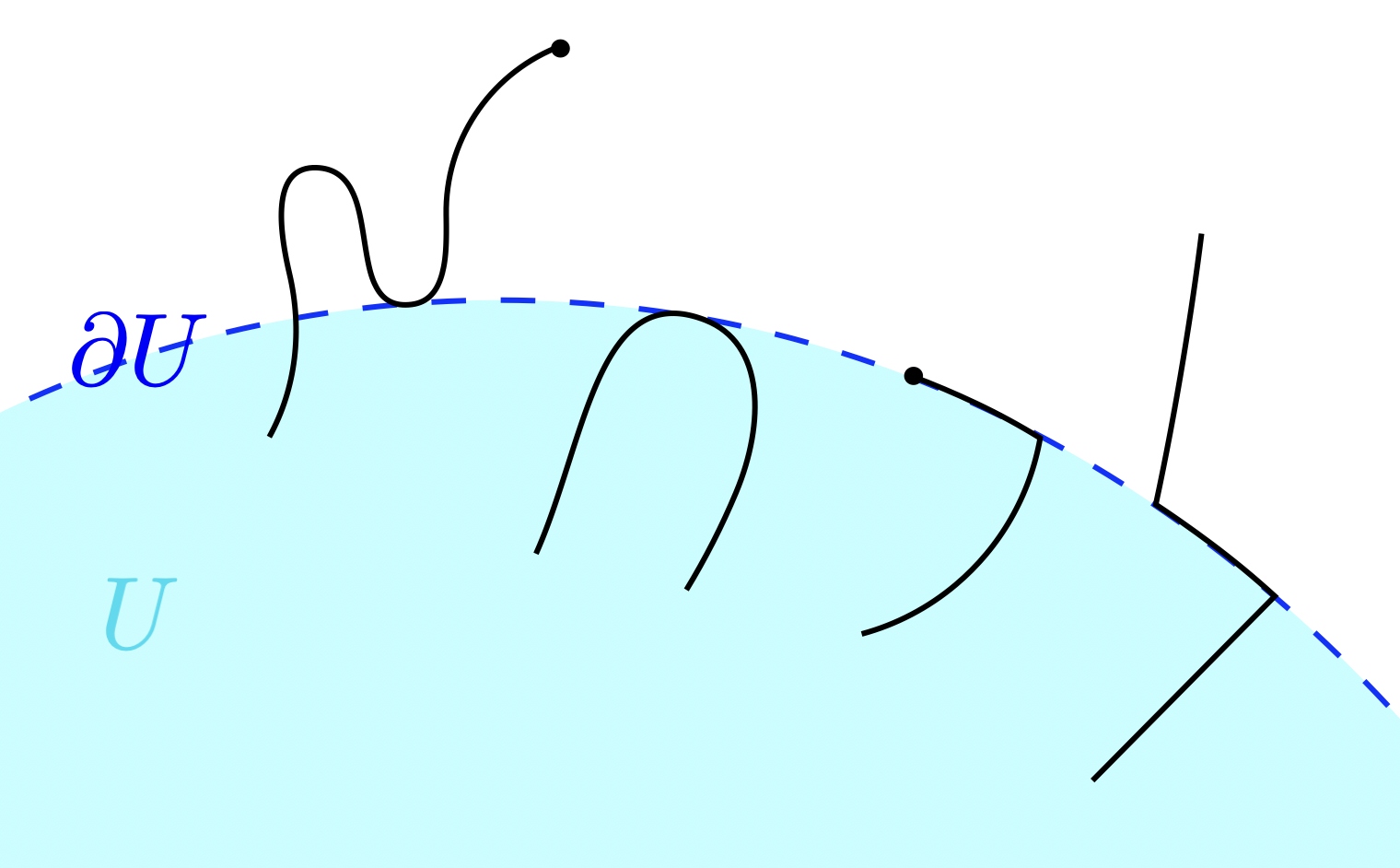}
\caption{A pierced open set (left),  versus an open set which is not pierced for four reasons (right).}\end{figure}

The space $X$ has the \textbf{Jure property} if $X$ has a basis of pierced open sets.  Subspaces of the Cantor fan clearly have the Jure property. These include all smooth fences and smooth fans.  The question was raised in \cite{pans}  as to whether every fan is Jure \cite[Problem 4.30]{pans}. In this paper we provide a positive answer.
 
\begin{ut}\label{t1}Every  fence has the Jure property.\end{ut}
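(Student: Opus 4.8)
The plan is to verify the basis criterion pointwise: it suffices to produce, for each $x_0\in X$ and each $\varepsilon>0$, a pierced open set $U$ with $x_0\in U\subseteq B(x_0,\varepsilon)$, since every open neighborhood of a point contains such a ball. The first thing I would record is a drastic simplification of what ``pierced'' demands. If $x\in\partial U$ is a singleton component, then no arc of $X$ contains $x$ (an arc is connected, hence lies in one component), so the piercing condition holds vacuously; likewise if $x$ is an \emph{endpoint} of its arc component $A$, then no arc $\alpha\subseteq X$ has $x$ as an interior point, so again $x$ is piercing vacuously. Thus the only boundary points that impose any condition are those interior to their arc component, and for such a point $x$ lying on an arc $A$ the requirement is exactly that both $U$ and $X\setminus\overline U$ accumulate at $x$ \emph{along} $A$. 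This reduces piercing to a transversality statement: if $f\colon X\to[0,1]$ is continuous, $f(x_0)\in(a,b)$, and neither $a$ nor $b$ is a local extreme value of $f\restriction A$ at any interior point of any arc component $A$, then $U:=f^{-1}\big((a,b)\big)$ is pierced, because at each boundary point (where $f=a$ or $f=b$) the function passes strictly through that value along the arc, putting $U$ on one side and $X\setminus\overline U$ on the other. So the whole theorem turns into the construction of a cutting function $f$ with two ``regular values'' $a,b$ near $f(x_0)$ whose band $f^{-1}((a,b))$ is both small and contains $x_0$.

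To control the size of the band I would localize using the component structure. Collapsing components gives a quotient map $q\colon X\to\mathcal C$ onto a compact, totally disconnected, hence $0$-dimensional, metric space, and the decomposition of a compactum into its components is upper semicontinuous; consequently every component near the central component $A_0:=q^{-1}(q(x_0))$ is confined to a prescribed neighborhood of $A_0$. Choosing a small clopen $W\subseteq\mathcal C$ containing $q(x_0)$, the saturated piece $X_W:=q^{-1}(W)$ is clopen in $X$, so its boundary in $X$ is empty and it is automatically pierced; moreover piercing computed in the clopen subspace $X_W$ agrees with piercing in $X$. Thus I may assume $X=X_W$ is a thin bundle of arcs (and points) hugging $A_0$. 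On this bundle I would take the cutting function $f$ to be a Tietze extension of a homeomorphism $A_0\cong[0,1]$ sending $x_0$ to $1/2$, so that $f$ is strictly monotone on the central arc. Shrinking $W$ and choosing cut levels $a<1/2<b$ close to $1/2$, the band $f^{-1}((a,b))$ is carried near the short subarc of $A_0$ about $x_0$ and lands inside $B(x_0,\varepsilon)$; transversality on $A_0$ itself is guaranteed by monotonicity, so the boundary points lying on $A_0$ are genuine crossings.

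The hard part is transversality to \emph{all} the other arcs of the bundle simultaneously: because fences need not be smooth, the nearby arcs may wiggle relative to $A_0$, and for a fixed extension $f$ the set of values that occur as a local extreme of $f\restriction A$ for \emph{some} arc $A$ can be large, since uncountably many arcs each contribute their own (countable) set of critical values. This is exactly the obstruction that prevents the clean product picture (a globally monotone-on-arcs $f$ would embed $X$ into $\mathcal C\times[0,1]$, forcing smoothness, which fails in general). To overcome it I would not insist on transversality to every arc; instead I would exploit the first observation that boundary points which are endpoints or singletons are harmless. Concretely, I would choose the pair $(a,b)$ generically and then handle the exceptional arcs on which $a$ or $b$ is a tangency by a further clopen refinement in $\mathcal C$ that isolates them, rerouting the boundary of $U$ so that it meets those arcs only at their endpoints (or not at all, by swallowing or excluding whole arcs via the clopen structure) rather than at interior tangencies. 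Making this rerouting precise --- packaging the generic choice of cut levels together with the upper-semicontinuous confinement of wiggling components into clopen cells, so that every interior boundary point is a crossing while the band remains inside $B(x_0,\varepsilon)$ --- is the technical heart of the argument, and is where I expect the real work to lie.
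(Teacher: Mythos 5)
Your reductions at the start are sound: piercing is vacuous at boundary points that are singleton components or endpoints of their arc component, so it is indeed equivalent to a transversality condition along arcs; and localizing to a clopen saturated neighborhood $X_W$ via the $0$-dimensional component space is legitimate (clopen sets have empty boundary, and arcs lie inside components). The size control also works: fix $a<1/2<b$ close to $1/2$, then shrink $W$ so that $f^{-1}([a,b])\cap X_W$ is Hausdorff-close to the corresponding subarc of $A_0$. But the proof has a genuine gap exactly where you say you ``expect the real work to lie,'' and the patch you sketch there does not work. The set $E\subseteq\mathcal C$ of exceptional arcs --- those on which $a$ or $b$ occurs as an interior tangency value of $f$ --- can be uncountable and can accumulate at $q(x_0)$ itself (each of uncountably many arcs contributes its own countable critical set, and their union can cover every candidate pair $(a,b)$). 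When $E$ accumulates at $q(x_0)$, no clopen refinement of $\mathcal C$ containing $q(x_0)$ can ``exclude'' the exceptional arcs; and ``swallowing'' a whole exceptional arc into $U$ is also forbidden, because the arcs of $X_W$ run alongside the \emph{entire} central arc $A_0$, whose diameter is in general much larger than $\varepsilon$, so $U$ would leave $B(x_0,\varepsilon)$. Rerouting $\partial U$ arc-by-arc is likewise not a local operation: whether a point of an arc $A$ lies in $\partial U$, and whether $X\setminus\overline U$ accumulates there along $A$, depends on how $U$ meets the arcs near $A$, so uncountably many dense modifications cannot be performed coherently. In short, the proposal identifies the correct obstruction (non-smoothness) but does not overcome it.

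It is worth seeing how the paper gets around precisely this point, because the contrast is instructive. The paper first proves that every fence embeds in $\mathbb R^2$ (via Bing's chaining technique), and then uses the circles $S_r(p)$ as the one-parameter family of cuts, rather than the level sets of an abstract Tietze extension. The analogue of your ``bad levels'' is the set $E$ of radii $r$ at which $X$ has a \emph{bend} at $S_r(p)$, and the whole point is that this set is first category in $(0,\infty)$ --- a fact whose proof is irreducibly planar: bends at different radii that separate a fixed point from $\infty$ have endpoints ordered along $S_q$ and disjoint interiors (this uses Jordan-type separation in the plane together with the absence of triods in a fence), and this nested structure yields the category estimate via limit arguments on sequences of bends. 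For an arbitrary cutting function $f$ on an abstract fence there is no such ordering structure, and the set of bad levels genuinely can be everything; this is why your generic choice of $(a,b)$ fails and why the paper must pass through the plane. So the missing idea is not a refinement of your clopen bookkeeping but a replacement of the cutting family: one needs a family of cuts for which the bad parameters form a small set, and producing such a family is what the embedding theorem plus the Baire category argument over radii accomplish.
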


\begin{uc}\label{t2}Every  fan has the Jure property.\end{uc}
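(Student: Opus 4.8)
The plan is to deduce the corollary from Theorem \ref{t1} by treating the vertex $v$ separately from every other point, using repeatedly that $X\setminus V$ is a fence for every open $V\ni v$. The guiding principle is that piercing is a \emph{local} property: whether a point $x\in\partial U$ is piercing depends only on the germs at $x$ of the arcs through $x$ and of the sets $U$ and $X\setminus\overline U$. Consequently, if $x$ has a neighborhood contained in the open set $X\setminus\overline V$, then on that neighborhood the fan $X$ and the fence $F:=X\setminus V$ coincide (their arcs agree, and a neighborhood of $x$ already lies in $F$), so piercing of an open subset of $F$ at $x$ transfers to piercing in $X$. I will invoke this transfer principle in both cases.

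For a point $x\neq v$ and a basic neighborhood $W\ni x$, I may first assume $v\notin\overline W$, and choose an open $V\ni v$ with $\overline V\cap\overline W=\varnothing$. Then $F:=X\setminus V$ is a fence containing $\overline W$ in its subset $X\setminus\overline V$. By Theorem \ref{t1} there is an open set $U$, pierced in $F$, with $x\in U\subseteq W$ and $\overline U\subseteq X\setminus\overline V$; such a $U$ is open in $X$, its $X$-boundary equals its $F$-boundary, and it lies in the open set $X\setminus\overline V$, so the transfer principle makes $U$ pierced in $X$. This yields a basis of pierced sets at every non-vertex point.

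The substantive case is the vertex. Given open $W\ni v$ with $W\neq X$, I choose open $V\ni v$ with $\overline V\subseteq W$ and set $F:=X\setminus V$, a fence; let $S:=\partial V$ and $K:=X\setminus W$, which are disjoint compacta in $F$. The aim is to produce an open set $G$ of $F$ with $S\subseteq G\subseteq W$, with $\overline G\cap K=\varnothing$, and whose $F$-boundary consists entirely of piercing points of $F$. Granting such a $G$, set $U:=V\cup G$. One checks that $U$ is open in $X$ (if $G=F\cap O$ with $O$ open in $X$ and $O\subseteq W$, then $O\subseteq V\cup G$, so $G$ is contained in the $X$-interior of $U$), that $v\in U\subseteq W$, and that $S\subseteq\operatorname{int}_X U$; hence $\partial_X U\subseteq F\setminus S=X\setminus\overline V$, which is open. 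Since $U$ agrees with $G$ on $X\setminus\overline V$, each boundary point of $U$ is a piercing point of $G$ in $F$ sitting in $X\setminus\overline V$, and the transfer principle makes it piercing for $U$ in $X$. Thus $U$ is a pierced neighborhood of $v$ inside $W$, and letting $W$ shrink gives a pierced basis at $v$.

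The main obstacle is the construction of the separating set $G$, i.e.\ upgrading the \emph{basis} of pierced sets furnished by Theorem \ref{t1} to a genuine \emph{separation} of the disjoint compacta $S$ and $K$ by a single pierced open subset of the fence $F$. Covering $S$ by finitely many pierced basic sets whose closures miss $K$ is immediate, but a finite union of pierced sets need not be pierced: near a boundary point of one member the approach from $X\setminus\overline G$ demanded by the definition may be blocked by another member. To overcome this I expect to exploit that the space of components of $F$ is compact and $0$-dimensional. Components meeting only $S$, or only $K$, can be separated by clopen (hence boundaryless, trivially pierced) unions of whole components; on a component meeting both, one must cut the arc between $S$ and $K$ at interior points, where the arc crosses transversally so the cut points are piercing. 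The difficulty is to make these local choices cohere into one open set with closed, piercing boundary: I plan to refine the cover of $S$, using $0$-dimensionality of the component space, to pierced pieces with pairwise disjoint closures, for which the union is again pierced (near a boundary point of the union only one piece is present, so its piercing is inherited). This separation statement is essentially a strengthening of Theorem \ref{t1} and is the technical heart of the corollary.
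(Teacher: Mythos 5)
Your overall reduction coincides with the paper's: handle $p\neq v$ by applying Theorem~\ref{t1} to a fence obtained by cutting a neighborhood of the vertex out of $X$ (the paper uses the fence $X\cap\overline U$ after shrinking $U$ off $v$), and handle $p=v$ by attaching pierced material around $\partial V$ to $V$. Your ``transfer principle'' is correct and supplies a justification the paper leaves implicit, so the case $p\neq v$ is complete. The genuine gap is the vertex case: you reduce it to the existence of an open set $G$ of the fence $F=X\setminus V$ with $\partial V\subset G$, $\overline G\cap(X\setminus W)=\varnothing$, and all $F$-boundary points piercing --- and then you never construct $G$. What you offer is explicitly a plan (``I expect to exploit\dots'', ``I plan to refine\dots''), and its key device, refining to pierced pieces with pairwise disjoint closures, cannot work in general: finitely many open sets with pairwise disjoint closures covering $\partial V$ force every connected component of $\partial V$ into a single piece, and $\partial V$ may contain arcs and nondegenerate compacta of components of $F$, so you are back to needing one pierced open set around a nondegenerate compactum --- which is exactly the separation problem, and is \emph{not} supplied by Theorem~\ref{t1}, a statement about neighborhoods of points. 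Likewise the coherence issue you flag (limits of cut points along a convergent sequence of arcs need not be piercing) is precisely what the bend/category machinery behind Theorem~\ref{t1} exists to control, and nothing in your sketch substitutes for it. So the proposal is incomplete at exactly what you yourself call the technical heart.

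For comparison, the paper settles the vertex case in one sentence: cover $\partial V$ by finitely many pierced open sets inside $U$ and assert that their union with $V$ is pierced. That is the very step you decline to take on faith, and your caution is mathematically warranted: piercedness is not preserved by finite unions. In the arc $[-1,1]$, viewed as a fan, the sets $(-\tfrac12,0)$ and $(0,\tfrac12)$ are each pierced, but their union is not, since $0$ is a ``capped'' boundary point; with a little more care one can arrange such a failure with sets that each meet $\partial V$, so the paper's assertion holds only for suitably chosen covers. A complete proof must therefore either justify the union step by a careful choice of the cover --- for instance, radial sets $U_{r_i}(p_i)\cap X$ in a plane embedding of a fence containing $\partial V$, with radii chosen via Propositions~\ref{bctz} and~\ref{bp}, Lemma~9, and one further genericity argument (the set of radii $r$ for which $S_r(p_i)$ meets the $0$-dimensional compactum $S_{r_j}(p_j)\cap X$ is nowhere dense, because the distance from $p_i$ is piecewise strictly monotone along the circle $S_{r_j}(p_j)$), so that distinct circles meet only off $X$; then each boundary point of the union sees only one member locally and its piercing transfers --- or else prove your stronger separation lemma. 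Your proposal contains neither, so as it stands it does not establish the corollary.
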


\section{Preliminaries}

\subsection{Plane embedding}
The first thing we will show is that  every fence can be embedded into the plane. Standard facts about plane topology can then be used to prove Theorem 2. 

The proof below uses  Bing's method of embedding chainable continua into the plane \cite{bin1,bin2}, and is essentially given in \cite[Section 2]{ov1}. An \textbf{$\varepsilon$-chain} is a finite sequence $U_1,\ldots U_n$ of open sets of diameter $<\varepsilon$, such that $\overline{U_i}\cap \overline{U_j}\neq\varnothing$  if and only if $|i-j|<2$.  We call a cover $\mathcal C$ of $X$ an \textbf{$\varepsilon$-cover} if $\mathcal C$ is a finite union of disjoint $\varepsilon$-chains, and non-adjacent elements of $\mathcal C$ are a positive distance apart.

\begin{ut}\label{emb}Every fence is embeddable into $\mathbb R^2$. \end{ut}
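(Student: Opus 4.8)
The plan is to follow Bing's plane-embedding scheme for chainable continua \cite{bin1,bin2}, in the form used in \cite[Section 2]{ov1}, adapted to accommodate the fact that a fence is disconnected. There are two ingredients. First, I would prove that every fence admits an $\varepsilon$-cover (a finite union of disjoint $\varepsilon$-chains) for each $\varepsilon>0$. Second, I would realize a suitably refining sequence of such covers as a nested family of "geometric chains" in $\mathbb R^2$, placed so that distinct chains occupy disjoint planar regions, and then identify $X$ with the intersection of these nested families.

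For the first ingredient, let $X$ be a fence and let $Q$ be the quotient of $X$ by the decomposition into connected components. Since $X$ is compact metric, this decomposition is upper semicontinuous and $Q$ is compact, metric, and totally disconnected, hence $0$-dimensional. Fix $\varepsilon>0$. Using $0$-dimensionality of $Q$ together with upper semicontinuity of the decomposition, I would partition $X$ into finitely many clopen pieces, each of which is a thin "tube" around a single component and hence a fence whose components all lie close (in Hausdorff distance) to one arc or point. Distinct clopen pieces are automatically a positive distance apart by compactness. Because an arc is chainable and each tube has small transverse extent, I would cover each tube by a single $\varepsilon$-chain: the chain is laid out along the parametrization of the central arc (so the chain condition is controlled by the intrinsic order of the arc, not by planar geometry), and then fattened slightly to absorb the nearby components. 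The union of the chains arising from the several tubes is the desired $\varepsilon$-cover.

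For the second ingredient, I would build a sequence of covers $\mathcal C_1,\mathcal C_2,\dots$ with mesh tending to $0$, where $\mathcal C_{n+1}$ refines $\mathcal C_n$ in the pattern sense: every link of $\mathcal C_{n+1}$ lies in a link of $\mathcal C_n$, and each chain of $\mathcal C_{n+1}$ threads through a single chain of $\mathcal C_n$. Because the components of $X$ are arcs, the refinement within a single chain may be taken essentially monotone, so that the inverse system associated to the chains following one strand has the arc as its limit; the only genuinely new bookkeeping, relative to the connected case, is that a chain of $\mathcal C_n$ may split into several disjoint chains of $\mathcal C_{n+1}$ as the $0$-dimensional transverse structure of $Q$ separates nearby components. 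I would then realize this combinatorics geometrically: at stage $n$ one has a finite collection of pairwise disjoint thin chains of boxes in $\mathbb R^2$, one per abstract chain of $\mathcal C_n$, placed in pairwise disjoint regions (there is always room, since disjoint chains never need to touch), and inside each box one inscribes the corresponding finer chain of $\mathcal C_{n+1}$, with box diameters tending to $0$. The nested intersection $\bigcap_n(\text{union of the stage-}n\text{ boxes})$ is then a compact subset of the plane, and the natural map sending each point of $X$ to the unique thread of boxes containing it is a continuous bijection from a compact space onto it, hence a homeomorphism. This exhibits $X$ as a subset of $\mathbb R^2$.

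The step I expect to be the main obstacle is the existence of genuine $\varepsilon$-covers, and more precisely the simultaneous control of two competing demands: each chain must follow the \emph{arc direction} of its central component, while the $0$-dimensionality of $Q$ must be used to break $X$ into finitely many chains that are pairwise a positive distance apart. The delicate point is bounding the transverse accumulation of nearby components onto a given arc so that a thin tube really is coverable by one chain without non-consecutive links meeting; this is exactly the place where the hypothesis that every component is an arc or a point, rather than an arbitrary chainable continuum, is essential.
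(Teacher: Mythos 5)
Your overall strategy is the same as the paper's --- Bing's method, with the two ingredients being (a) existence of $\varepsilon$-covers and (b) geometric realization by nested chains of boxes --- but ingredient (a) has a genuine gap, located exactly at the step you yourself flag as the main obstacle, and the obstacle is an artifact of the order in which you make your choices. You fix the clopen saturated tube $H$ around a component $C$ \emph{first}, and then try to cover $H$ by a single $\varepsilon$-chain obtained by fattening a chain laid along $C$. With that order the worry is real: ``thin'' only means that every point of $H$ is within $\delta$ of $C$ (this is one-sided, not Hausdorff closeness as you write --- the tube can contain many tiny components clustered near one end of $C$, or arcs that oscillate back and forth along $C$ in ways invisible to the metric of $C$ itself), and nothing in your sketch prevents the fattened links from forcing non-adjacent closures to meet; no argument is offered that a pre-chosen tube is chainable at all. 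The fix, which is what the paper does, is to reverse the quantifiers: build the chain first, then choose the tube inside it. Concretely, take a chain $U_1,\dots,U_n$ of relatively open subsets of $C$ with $\diam U_i<\varepsilon$, extend each $U_i$ to an open set $V_i\subset X$, shrunk so that $\diam V_i<\varepsilon$ and $\overline{V_i}\cap\overline{V_j}=\varnothing$ whenever $|i-j|\geq 2$; adjacency of consecutive links is witnessed by $\overline{U_i}\cap\overline{U_{i+1}}\neq\varnothing$, which lies on $C$. Only now use $0$-dimensionality of the component space to choose a saturated clopen $H$ with $C\subset H\subset\bigcup_i V_i$. The sets $H\cap V_i$ are then \emph{automatically} an $\varepsilon$-chain covering $H$: non-adjacent disjointness is inherited by subsets, and adjacent intersections survive because they lie on $C\subset H$. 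Whatever other components sit in the tube are absorbed with no transverse estimate whatsoever --- the ``delicate point'' disappears. (Your global bookkeeping is fine and even attractive: finitely many tubes by compactness, disjointification into clopen pieces, and restriction of a chain to a clopen set splits it into finitely many disjoint chains a positive distance apart; done in the corrected order, this gives a somewhat more direct route to $\varepsilon$-covers than the paper's induction along a linear ordering of the component space $K$.)

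A secondary, smaller defect is in ingredient (b): ``the unique thread of boxes containing it'' is not well defined, since a point of $X$ typically lies in two overlapping adjacent links at every stage, and turning the nested boxes into a homeomorphism requires the pattern-refinement conditions on consecutive covers that the paper records (for instance, that no chain of $\mathcal C^k$ with fewer than nine links meets two non-adjacent links of $\mathcal C^{k-1}$). Rather than re-deriving this machinery, you can do what the paper does and quote Bing directly: \cite[p.654]{bin1} to realize the covers as collections of chains of rectangles following the same patterns, and \cite[Theorem 11]{bin2} to conclude that the resulting nested intersection is homeomorphic to $X$.
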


\begin{proof}Let $X$ be a fence.  The decomposition of $X$ into components is a zero-dimensional compact metric space $K$, which can be viewed as a linearly ordered subspace of the real line $\mathbb R$ with $\max(K)=1$. Let $g:X\to K$ be the continuous mapping whose point pre-images are components of $X$. Let $$x_0=\sup \big\{x\in K:\text{ for each }x'\leq x\text{  there exists a } \textstyle\frac{1}{2}\text{-cover of }g^{-1}([0,x'])\big\}.$$

Assume that $g^{-1}\{x_0\}$ is an arc. Then  $g^{-1}\{x_0\}$ is covered by a chain of open intervals $U_1,\ldots,U_n$ whose diameters  are less than $\frac{1}{3}$ in $X$. For each $U_i$ let $V_i$ be open in $X$ such that $V_i\cap g^{-1}\{x_0\}=U_i$. Since $K$ is 0-dimensional and $X$ is compact, there exists a clopen set $H\subset K$ such that $g^{-1}(H)\subset \bigcup _{i=1}^n V_i$ and the sets $g^{-1}(H)\cap V_i$ form a $\frac{1}{2}$-chain. Moreover, we may assume that $H= [a,b]\cap K$ for two real numbers $a,b\in K$. Let $x_1$ be the maximum element of $K$ below $a$. By the definition of $x_0$, there exists a $\frac{1}{2}$-cover $\mathcal C$ of $g^{-1}([0,x_1])$. Since $g^{-1}([0,x_1])$ is open, we may assume that $\bigcup \mathcal C\subset g^{-1}([0,x_1])$. Then $\mathcal C\cup \{V_1,\ldots, V_n\}$ is an $\varepsilon$-cover of $g^{-1}[0,b]$. Thus $x_0=b$. If $b\neq 1$ then let $x_2$ be the next element of $K$ after $b$. Repeating the argument shows that $x_2$ is contained in a $\frac{1}{2}$-cover of $g^{-1}([0,x_2])$, which contradicts the definition of $x_0$. Thus $x_0=1$ and  $X$ has a $\frac{1}{2}$-cover $\mathcal C^1$.

There exists $\varepsilon>0$ such that each pair of  non-intersecting elements of $\mathcal C^1$ are $\varepsilon$ apart. By the Lebesgue number lemma \cite[Lemma 27.5]{mun}, there exists $\delta>0$ such that any set of diameter less than $\delta$ is contained in an element of $\mathcal C^1$.   Let $\gamma=\min\{\frac{1}{4},\frac{\varepsilon}{8},\delta\}$.   By the arguments above,  there exists a $\gamma$-cover $\mathcal C^2$ of $X$. In this manner, we obtain a sequence of $(\frac{1}{2})^k$-covers  $\mathcal C^k$ of $X$,  that are formed from finitely-many chains with disjoint closures, which refine the chains in  $\mathcal C^{k-1}$, and such that no chain in  $\mathcal C^k$ of less than nine links intersects two non-adjacent links in $\mathcal C^{k-1}$. 

By \cite[p.654]{bin1}, each  $\mathcal C^k$ can be represented in $\mathbb R^2$ as  a collection  $\mathcal D^k$ of chains consisting of interiors of rectangles, which follow the same patterns as chains in $\mathcal C^k$ with respect to $\mathcal C^{k-1}$.   By   \cite[Theorem 11]{bin2},  $(\bigcup \mathcal D^1)\cap (\bigcup \mathcal D^2)\cap \ldots$ is homeomorphic to $X$.\end{proof}

\begin{uc}\label{coo}Let $X$ be a fan with vertex $v$. If $V$ is any open subset of $X$ containing $v$, then $X\setminus V$ is embeddable into $\mathbb R^2$.\end{uc}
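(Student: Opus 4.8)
The plan is to deduce this immediately from the two results already established, with essentially no new work. The point is that the hypothesis of Theorem \ref{emb} — being a fence — is automatically satisfied by $X\setminus V$ as soon as $X$ is a fan and $V$ is an open neighborhood of the vertex.

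First I would invoke Proposition 1. Since $X$ is a fan with vertex $v$ and $V\subset X$ is open with $v\in V$, Proposition 1 asserts directly that $X\setminus V$ is a fence. (For completeness one notes that $X\setminus V$ is closed in the compact continuum $X$, hence compact, so it is a fence in the sense of a compact metric space whose components are arcs and singletons; but this compactness is already part of the content of Proposition 1.)

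Next I would apply Theorem \ref{emb}, which states that every fence embeds into $\mathbb{R}^2$. Applying it verbatim to the fence $X\setminus V$ produces an embedding $X\setminus V\hookrightarrow\mathbb{R}^2$, which is exactly the desired conclusion.

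I do not anticipate any genuine obstacle here: all of the real difficulty — constructing the refining sequence of $\varepsilon$-covers and passing to Bing's planar representation — has already been absorbed into the proof of Theorem \ref{emb}. The corollary is thus a one-line composition, whose only verification is that Proposition 1 genuinely applies, which it does by hypothesis.
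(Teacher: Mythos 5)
Your proposal is correct and is exactly the paper's intended argument: the corollary is stated without proof precisely because it is the immediate composition of Proposition 1 (which shows $X\setminus V$ is a fence) with Theorem \ref{emb} (which embeds every fence into $\mathbb{R}^2$). Nothing more is needed.
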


\subsection{Notation for radial sets in the plane}
For every $p\in \mathbb R^2$ and $r,r'\in (0,\infty)$,  let 
\begin{itemize}
\item $S_r(p)$ denote the circle in $\mathbb R^2$ of radius $r$ centered at $p$,
\item  $U_r(p)$ be the open $r$-ball centered at $p$, 
\item $D_r(p)$ be the  closed $r$-disc centered at $p$,  and
\item $A_{rr'}(p)$ be the closed annulus between  $S_r(p)$ and $S_{r'}(p)$. 
\end{itemize}  When the point $p$ is  fixed, we will denote these sets by simply   $S_r$, $U_r$, $D_r$,  $A_{rr'}$.

\subsection{Topological dimension}

A topological space $X$ is \textbf{0-dimensional} if $X$ has a basis of (cl)open sets with empty boundaries,  \textbf{1-dimensional} if $X$ is not $0$-dimensional but has a basis of open sets with $0$-dimensional boundaries, etc.

Most circles intersect   $1$-dimensional plane compacta at at $0$-dimensional sets. 

\begin{up}\label{bctz}Let $X\subset \mathbb R^2$ be compact and $1$-dimensional. Then for every $p\in X$, $$Z=\{r\in (0,\infty):S_r(p)\cap X \text{ is 0-dimensional}\}$$ is a dense $G_{\delta}$-set.\end{up}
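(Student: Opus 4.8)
The plan is to work in polar coordinates about $p$ and to convert the dimension condition on each circle into a statement about subarcs. Fix $p$, and recall that a compact subset of the circle $S_r(p)$ — a $1$-manifold — is $0$-dimensional precisely when it is totally disconnected, which in turn happens precisely when it contains no nondegenerate subarc of $S_r(p)$ (any connected subset of a circle is a point, an arc, or the whole circle, and a compact totally disconnected metric space is $0$-dimensional). Accordingly, for $\epsilon>0$ I set
$$B_\epsilon=\{r\in(0,\infty): S_r(p)\cap X \text{ contains a closed subarc of } S_r(p)\text{ of angular length }\ge\epsilon\},$$
so that $(0,\infty)\setminus Z=\bigcup_{n\ge 1}B_{1/n}$. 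It then suffices to prove that each $B_\epsilon$ is closed and nowhere dense: closedness makes $Z$ a $G_\delta$, and nowhere density makes the complement of $Z$ meager, so that $Z$ is dense in the Baire space $(0,\infty)$.

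For closedness, suppose $r_k\in B_\epsilon$ with $r_k\to r_*$. Choosing in each slice a closed arc of angular length exactly $\epsilon$, say spanning angles from $\alpha_k$ to $\alpha_k+\epsilon$, I pass to a subsequence with $\alpha_k\to\alpha_*$. For any angle $\theta$ strictly between $\alpha_*$ and $\alpha_*+\epsilon$, we have $\theta\in[\alpha_k,\alpha_k+\epsilon]$ for all large $k$, so the point of $S_{r_k}(p)$ at angle $\theta$ lies in $X$ and converges to the point of $S_{r_*}(p)$ at angle $\theta$; since $X$ is closed this limit lies in $X$. Thus the open arc of $S_{r_*}(p)$ from $\alpha_*$ to $\alpha_*+\epsilon$ lies in $X$, and taking closures gives $r_*\in B_\epsilon$.

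The substantive step is nowhere density, which I would prove by contradiction, and this is the part to get right. Suppose some $B_\epsilon$ contains an interval $(a,b)$, so every circle $S_r(p)$ with $r\in(a,b)$ meets $X$ in a set containing an arc of angular length $\ge\epsilon$. Partition the circle of directions into $N$ equal arcs $I_1,\dots,I_N$ with $N$ large enough that $2\pi/N<\epsilon/2$; then any arc of angular length $\ge\epsilon$ contains some $I_j$ in its entirety. Writing $E_j$ for the set of $r\in(a,b)$ such that the subarc of $S_r(p)$ subtending $I_j$ is contained in $X$, we obtain $(a,b)=\bigcup_{j=1}^N E_j$. Each $E_j$ is closed in $(a,b)$, because $X$ is closed and $I_j$ is compact, so by the Baire category theorem some $E_j$ contains a subinterval $(a',b')$. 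Then $X$ contains the annular sector $\{q:\ a'<|q-p|<b',\ \arg(q-p)\in\operatorname{int}I_j\}$, a nonempty open subset of $\mathbb R^2$; this open set contains a closed disc, which is $2$-dimensional, so $\dim X\ge 2$, contradicting the hypothesis.

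The expected obstacle is exactly this last implication, and it is worth flagging that the tempting measure-theoretic shortcut fails: a compact plane set meeting each circle in an arc may well have positive area while being only $0$-dimensional (for instance a product of two fat Cantor sets), so positive Lebesgue measure does not detect topological dimension. The Baire-category pigeonhole above is what matters, since it produces a genuine planar region inside $X$ rather than merely a set of positive measure, which is what forces $\dim X\ge 2$.
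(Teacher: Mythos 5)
Your proof is correct, and its skeleton matches the paper's: both reduce ``$S_r(p)\cap X$ is $0$-dimensional'' to ``$S_r(p)\cap X$ contains no nondegenerate arc,'' both write the complement of $Z$ as a countable union of closed sets of radii, and both kill each such set by observing that an interval's worth of radii carrying arcs at a common angular position would force $X$ to contain a filled annular sector, contradicting $1$-dimensionality. The difference lies in the decomposition. You index the bad radii by angular length (the sets $B_{1/n}$), which leaves the angular \emph{position} of the arc free; as a consequence you need two extra moves: a subsequence argument on the left endpoints $\alpha_k$ to prove each $B_\epsilon$ closed, and a finite pigeonhole partition plus a second application of the Baire category theorem inside $(a,b)$ to pin the arcs to a common angular window before the sector appears. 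The paper instead indexes by pairs of rational angles: $F_{ij}$ is the set of $r$ for which the arc of $S_r$ running counterclockwise from $\vartheta_i$ to $\vartheta_j$ lies in $X$. Because the angular position is fixed in advance, closedness of $F_{ij}$ is immediate from compactness of $X$, and an interval inside $F_{ij}$ \emph{is} already a filled annular sector, so no pigeonhole is needed; the rational-endpoint parametrization collapses both of your extra steps. Your closing remark about measure versus category is apt and worth keeping in mind: positive Lebesgue measure does not contradict $1$-dimensionality, so a category-style argument producing a genuine open planar region, as both proofs do, is indeed the right mechanism.
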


\begin{proof}Let $\{\vartheta_j:j<\omega\}$ be an enumeration of the countable set $\mathbb Q\cap [0,2\pi]$. For all $i,j$ such that $\vartheta_i<\vartheta_j$,  define $\alpha_{rij}$ to be the arc of $S_r$ that goes from the point at $\vartheta_i$ radians to the point at $\vartheta_j$ radians (counterclockwise). Let $F_{ij}=\{r\in  (0,\infty):\alpha_{rij}\subset X\}$. By compactness of $X$, each $F_{ij}$ is closed. And since $X$ is $1$-dimensional, $F_{ij}$ cannot contain any interval; otherwise $X$ would contain a filled region of the plane in the form of a sector of an annulus. Therefore $F_{ij}$ is nowhere dense and $Z=(0,\infty)\setminus \bigcup_{i,j}F_{ij}$ is  dense $G_{\delta}$.
 \end{proof}
\subsection{Bends}Let $X\subset \mathbb R^2$ and  $p\in X$. For any arc $\beta$ let  $a_\beta$ and $b_{\beta}$ denote the two endpoints of $\beta$.  If $q,r\in (0,\infty)$,  we say that an arc $\beta\subset X$ is a \textbf{bend at  $S_r$ terminating at $S_q$}  if:
\begin{itemize}
\item $\beta\subset A_{rq}$, 
\item  $\beta\cap S_r\neq\varnothing$; and \item $\beta\cap S_{q}=\{a_\beta,b_\beta\}$.
\end{itemize}
Define an arc $\beta\subset X$ to be a \textbf{bend at }$S_r$ if there exists $q$ such that $\beta$ is a bend at $S_r$ terminating at $S_q$.

\begin{figure}\centering\includegraphics[scale=0.4]{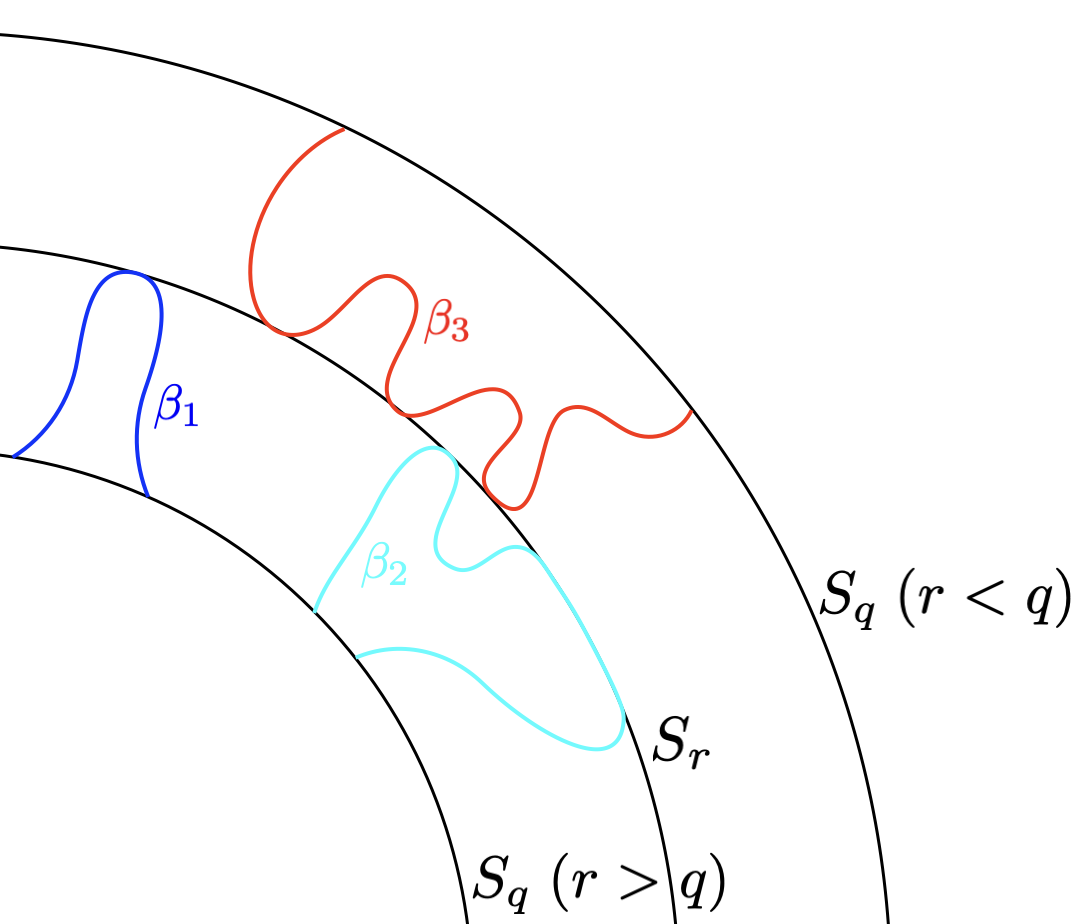}\caption{Bends at $S_r$ that terminate at $S_q$}\end{figure}

\begin{up}\label{bp} If $S_r\cap X$ is 0-dimensional and $X$ has no bend at $S_r$, then  $U_r\cap X$ is pierced. \end{up}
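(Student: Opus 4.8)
### Proof strategy

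The plan is to verify the piercing condition at an arbitrary point $x\in\partial(U_r\cap X)$ directly from the two hypotheses. First I would pin down where such boundary points live. Since $U_r\cap X$ is open in $X$, its boundary is contained in $S_r\cap X$: any point of $X$ strictly inside the open ball is interior to $U_r\cap X$, and any point strictly outside the closed ball is in the exterior, so the only candidates for boundary points are the points of $X$ lying exactly on the circle $S_r$. Thus it suffices to show that every $x\in S_r\cap X$ that happens to be a boundary point is a piercing point for $U_r\cap X$.

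Next, fix such an $x$, an open set $V\ni x$, and an arc $\alpha\subset X$ passing through $x$ (so $x$ is not an endpoint of $\alpha$). I must produce points of $\alpha\cap V$ in both $U_r\cap X$ and $X\setminus\overline{U_r\cap X}$; since $\overline{U_r\cap X}\subset D_r\cap X$, it is enough to find a point of $\alpha\cap V$ strictly inside $S_r$ and a point strictly outside $S_r$. The key idea is to use the radial-distance function $f(y)=\lvert y-p\rvert$ restricted to $\alpha$. Because $x$ is not an endpoint of $\alpha$, I can choose a small subarc $\alpha'\subset\alpha\cap V$ containing $x$ in its interior, with both endpoints of $\alpha'$ distinct from $x$. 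I would then argue by contradiction: if $\alpha'$ never crosses strictly inside $S_r$ (i.e. $f\geq r$ on $\alpha'$), then $\alpha'$ attains a local minimum of $f$ equal to $r$ at $x$, and a suitable subarc of $\alpha'$ around $x$ lies in some annulus $A_{rq}$, meets $S_r$ at $x$, and terminates on $S_q$ — precisely a bend at $S_r$, contradicting the no-bend hypothesis. The symmetric argument rules out $f\le r$ on $\alpha'$ (an outward bend), again contradicting the absence of bends at $S_r$. Hence $\alpha'\subset V$ meets both $\{f<r\}$ and $\{f>r\}$, giving the required points in $U_r\cap X$ and $X\setminus\overline{U_r\cap X}$.

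The main obstacle, and the place needing the most care, is extracting a genuine bend from the assumption that $\alpha$ fails to cross the circle on one side. Having $f\ge r$ near $x$ does not immediately hand me an arc satisfying all three bullet points of the bend definition: I need to choose the terminating radius $q>r$ correctly and trim $\alpha$ to a subarc $\beta$ that stays inside $A_{rq}$, meets $S_r$, and hits $S_q$ exactly at its two endpoints. The hypothesis that $S_r\cap X$ is $0$-dimensional is what prevents $\alpha$ from running along the circle and lets me pass from "$x$ is not an endpoint" to "$\alpha$ genuinely leaves $S_r$ on both sides of $x$ while staying in the closed exterior region," so that a component of $\alpha\cap A_{rq}$ through $x$ is the desired bend. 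I would handle the degenerate possibility that $x$ is isolated in $\alpha\cap S_r$ versus a limit of such points using $0$-dimensionality of $S_r\cap X$, choosing $q$ so that $S_q\cap X$ separates $x$ from the rest of $\alpha$ within the annulus. Once the bend is produced the contradiction is immediate, and symmetry dispatches the inward case, completing the verification that $U_r\cap X$ is pierced.
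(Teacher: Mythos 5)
Your proposal is correct and follows essentially the same route as the paper's own proof: supposing the arc near $x$ stays on one side of the circle (inside $D_r$ or outside $U_r$), you use $0$-dimensionality of $S_r\cap X$ to find points of the arc strictly off $S_r$ on both sides of $x$, choose an intermediate radius $q$, and trim to a subarc of the arc inside the annulus that meets $S_q$ exactly at its endpoints, producing a bend at $S_r$ and hence a contradiction. The only differences are presentational --- the paper phrases this as a contrapositive over all boundary points, while you verify piercing directly at a fixed $x$, $V$, $\alpha$ --- and your explicit attention to the trimming step (so that $\beta\cap S_q$ is exactly the two endpoints) is a point the paper's own write-up passes over quickly.
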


\begin{proof}Suppose that  $S_r\cap X$ is 0-dimensional and $x\in \partial (U_r\cap X)$ is not a piercing point of $U_r\cap X$.  Let $\alpha$ and $V$ witness this fact; so $x$ is a cut point of $\alpha$, and $\alpha\cap V$ is a subset of either $D_r$ or $\mathbb R^2\setminus U_r$. Let us  assume $\alpha\cap V\subset D_r$. Since $S_r\cap X$ is 0-dimensional, each component of $\alpha\setminus \{x\}$ must intersect $U_r$. Let $a,b\in U_r$ be points in different components of $\alpha\setminus \{x\}$. Let $\varepsilon>0$ be less than $\min\{d(a,x),d(b,x)\}$. Let $q=r-\varepsilon$. Let $\beta$ be the component of $x$ in $A_{qr}$. Then $\beta$ is a bend at $S_r$ terminating at $S_q$. \end{proof}

If $\beta$ is a bend at $S_r$ terminating at $S_q$, we say that $\beta$  \textbf{separates $\vartheta$ from }$\infty$ if there is an arc $\alpha\subset D_q$ such that $\alpha\cap S_q=\{a_\beta,b_\beta\}$, and  $\vartheta$ is contained in the   bounded component of  $\mathbb R^2\setminus (\alpha \cup \beta)$. Let $$\mathfrak B_{qr\vartheta}$$
be the set of all bends at $S_r$ that terminate at $S_q$ and separate $\vartheta$ from $\infty$. 

\subsection{Limsup of continua}
Given a sequence $A_1,A_2,\ldots$ of sets in a space $X$, let  
$\limsup A_n$ denote the set of points $x\in X$ such that if $U$ is open in $X$ and $x\in U$, then  $U \cap A_n\neq\varnothing$ for infinitely many $n$. 

\begin{up}[{\cite[Lemma 4.6]{pans}}] Let $X$ be a compact metric space, let $x\in X$, and for each $n$ let $A_n$ be a continuum in $X$ and let $x_n\in A_n$. If $x_n$ converges in $X$,  then $\limsup A_n$ is a continuum.\label{lsup}\end{up}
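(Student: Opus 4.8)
The plan is to verify the three defining properties of a continuum for $A:=\limsup A_n$ in turn — non-emptiness, compactness, and connectedness — with the last being the crux. The first two are routine. Non-emptiness is immediate from the hypothesis: if $x_n$ converges to some $x_0$, then every open $U\ni x_0$ contains $x_n$ for all large $n$, so $U\cap A_n\neq\varnothing$ for infinitely many $n$, whence $x_0\in A$. Compactness follows because a limit superior of any sequence of sets is closed — if $y\notin A$, some open $U\ni y$ misses $A_n$ for all but finitely many $n$, and then that same $U$ witnesses $z\notin A$ for every $z\in U$, so $U\cap A=\varnothing$ — and closed subsets of the compact space $X$ are compact.

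The heart of the argument is connectedness, which I would establish by contradiction. Suppose $A=P\cup Q$ with $P,Q$ non-empty, disjoint, and closed; since $X$ is compact metric, choose disjoint open sets $U\supseteq P$ and $W\supseteq Q$, and put $T=X\setminus(U\cup W)$, which is compact. The key sublemma is that $A_n\cap T\neq\varnothing$ for only finitely many $n$. Indeed, were there infinitely many such $n$, I would pick $t_n\in A_n\cap T$ along that index set and pass to a convergent subsequence by compactness of $T$; its limit would lie in $T$ and, by the same neighborhood argument used for non-emptiness, also in $A=\limsup A_n\subseteq U\cup W$, which is absurd. Consequently, for all large $n$ the continuum $A_n$ is contained in $U\cup W$, and being connected while meeting the disjoint open sets $U$ and $W$, each such $A_n$ lies entirely in $U$ or entirely in $W$.

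Finally I would invoke the convergence hypothesis, which is the ingredient genuinely doing the work. Since $x_0=\lim x_n\in A$, say $x_0\in P\subseteq U$, so $x_n\in U$ for all large $n$; as $x_n\in A_n$ and each large $A_n$ lies wholly in $U$ or wholly in $W$, this forces $A_n\subseteq U$ for all large $n$. But choosing any $q\in Q\subseteq W$ gives $q\in A$, so $W$ meets $A_n$ for infinitely many $n$, contradicting $A_n\subseteq U$ (which is disjoint from $W$) for all large $n$. Hence no separation of $A$ exists, and $A$ is connected, completing the proof that $A$ is a continuum. I expect the main subtlety to be pinpointing exactly where convergence of the $x_n$ is used: it is what anchors every sufficiently late $A_n$ to a single side of the putative separation, and without it (for instance, continua alternating between two disjoint targets) the statement is simply false, so the proof must route the contradiction through this hypothesis rather than through the sublemma alone.
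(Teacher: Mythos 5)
Your proof is correct, but there is nothing in the paper to compare it against: the paper does not prove this proposition, it imports it verbatim with a citation to \cite[Lemma 4.6]{pans}. Your argument is the classical one (in the spirit of Whyburn and Nadler, where the usual statement is that $\limsup A_n$ is connected whenever the $A_n$ are connected and $\liminf A_n\neq\varnothing$). All three steps check out: non-emptiness and closedness of $\limsup A_n$ are exactly as routine as you say; for connectedness, separating $A=P\cup Q$ by disjoint open sets $U\supseteq P$, $W\supseteq Q$ and using compactness of $T=X\setminus(U\cup W)$ to show that all but finitely many $A_n$ miss $T$ (hence, being connected, lie wholly in $U$ or wholly in $W$) is the standard sublemma, and your use of the convergent sequence $x_n$ to anchor every late $A_n$ on the side of $\lim x_n$ is precisely how the hypothesis enters --- it is equivalent to producing a point of $\liminf A_n$, which is what the classical formulation demands. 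Your closing remark is also accurate: without that hypothesis the statement fails (take $A_n$ alternating between two disjoint singletons, so that $\limsup A_n$ is a two-point set), so any correct proof must route the contradiction through the convergence of the $x_n$ exactly as yours does. The only cosmetic point worth noting is that the ``$x\in X$'' in the statement is vestigial (it is never used; the relevant point is $\lim x_n$), and your proof correctly ignores it.
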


\section{Proof of Theorem 2}

Let $X$ be a fence. By Theorem \ref{emb} we may assume $X\subset \mathbb R^2$.

\begin{ul} For any given $p\in X$, the set 
$$E=\{r\in (0,\infty):X\text{ has a bend at }S_r(p)\}$$ is first category in $(0,\infty)$.\end{ul}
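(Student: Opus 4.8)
The plan is to write $E$ as a countable union of nowhere dense sets and invoke the Baire category theorem. The organizing principle is the separation machinery $\mathfrak B_{qr\vartheta}$: I claim every bend belongs to one of countably many types indexed by rational data. Indeed, if $\beta$ is a bend at $S_r(p)$ terminating at $S_q(p)$, join its two endpoints by an arc $\alpha\subset D_q(p)$ to form a loop $\alpha\cup\beta$; by the Jordan curve theorem the bounded complementary component is open and nonempty, so it contains a point lying in a rational direction $\vartheta$ from $p$. Truncating $\beta$ at its last crossing of a circle $S_{q'}(p)$ for a rational $q'$ strictly between $q$ and $r$ yields a bend terminating at $S_{q'}(p)$ that still touches $S_r(p)$ and still separates $\vartheta$ from $\infty$. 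Hence $\beta\in\mathfrak B_{q'r\vartheta}$ with $q',\vartheta\in\mathbb Q$, and I obtain $E=\bigcup E_{q\vartheta}$, where $E_{q\vartheta}=\{r:\mathfrak B_{qr\vartheta}\neq\varnothing\}$ and the union is over the countable set of rational pairs.

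Next I would show each $E_{q\vartheta}$ is closed using Proposition \ref{lsup}. Given $r_n\to r$ with $r_n\in E_{q\vartheta}$, pick bends $\beta_n\in\mathfrak B_{qr_n\vartheta}$ and tips $x_n\in\beta_n\cap S_{r_n}(p)$. After passing to a subsequence, $x_n$ converges to some $x\in S_r(p)$, so by Proposition \ref{lsup} the set $M=\limsup\beta_n$ is a continuum; it lies in $X$, is contained in the annulus $A_{rq}(p)$, and meets $S_r(p)$ at $x$ while reaching $S_q(p)$, so $M$ is nondegenerate. Since $X$ is a fence, $M$ is an arc, and inside $M$ I can isolate a bend at $S_r(p)$ terminating at $S_q(p)$. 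The separation of $\vartheta$ from $\infty$ is a limit-stable condition, because $\vartheta$ remains trapped in the bounded complementary region of the limiting loop; thus $r\in E_{q\vartheta}$ and $E_{q\vartheta}$ is closed.

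It remains to prove that each closed set $E_{q\vartheta}$ has empty interior, and this is the main obstacle. Suppose toward a contradiction that an interval $(c,d)$ is contained in $E_{q\vartheta}$, and for each $r\in(c,d)$ fix a bend $\beta_r$ touching $S_r(p)$, terminating at $S_q(p)$, and separating a common point $z$ (in direction $\vartheta$) from $\infty$. The key structural input is that a fence contains no transversally crossing arcs: two arcs of $X$ that crossed would lie in a single component, yet a single arc cannot contain such a crossing. Consequently the loops determined by the $\beta_r$, all of which enclose the same point $z$, cannot cross and must be nested, with strictly increasing outer radius. A whole interval of nested, non-crossing separators of the region between the extreme loops forces $X$ to contain a filled sector of an annulus, exactly the two-dimensional configuration ruled out in the proof of Proposition \ref{bctz}; this contradicts the $1$-dimensionality of $X$.

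Since each $E_{q\vartheta}$ is then nowhere dense and $E$ is a countable union of such sets, $E$ is of first category. The delicate point throughout is the empty-interior step: one must leverage the fixed enclosed direction $\vartheta$ together with the non-branching of arcs in a fence to upgrade a one-parameter family of nested bends into a genuinely two-dimensional piece of $X$. The per-component picture, namely that on a single component arc the radial distance $x\mapsto|x-p|$ has only countably many local-extremum values and hence contributes only countably many bend radii, is reassuring but does not by itself control the uncountably many components, so the planar separation argument is essential.
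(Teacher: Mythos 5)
Your reduction of $E$ to a countable union $\bigcup E_{q\vartheta}$ over rational data is the paper's Claim 10 and is essentially fine (one small repair: $\vartheta$ must be chosen as a point of the circle $S_{q'}(p)$ lying on the boundary of the enclosed region, since an enclosed point \emph{in} a rational direction does not make the point of $S_{q'}(p)$ \emph{at} that direction enclosed). The serious gap is your second step: $E_{q\vartheta}$ is \emph{not} closed, and your limit argument fails exactly for decreasing sequences $r_n\searrow r$. Concretely, let $p$ be the origin, $q=1$, $\vartheta$ the point of $S_1$ at angle $\pi/2$, and let $\kappa$ be the M-shaped arc given in polar coordinates by a profile $f:[\pi/4,3\pi/4]\to[1,r]$ with $f=1$ exactly at $\pi/4$, $\pi/2$, $3\pi/4$ and $f=r$ at two interior angles; let $\beta_n$ be the same profile raised by $1/n$, with short flanks returning to $S_1$ at angles $\pi/4-1/n$ and $3\pi/4+1/n$. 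Then $X=\kappa\cup\bigcup_n\beta_n$ is a fence, each $\beta_n$ lies in $\mathfrak B_{q r_n\vartheta}$ with $r_n=r+1/n\searrow r$, yet $\mathfrak B_{qr\vartheta}=\varnothing$: the only subarcs of $X$ contained in $A_{1r}$, touching $S_r$, and meeting $S_1$ at exactly two points are the two halves of $\kappa$, and each of these has $\vartheta$ as an \emph{endpoint}, so it separates nothing. Your assertion that ``separation of $\vartheta$ from $\infty$ is a limit-stable condition'' is precisely what breaks: $\vartheta$ can be absorbed into the limiting arc. (Also, for decreasing $r_n$ one only has $\limsup\beta_n\subset A_{q r_1}$, not $A_{qr}$.) This asymmetry is why the paper proves only one-sided closure (Claim 13, increasing radii, where the nesting from Claim 12 prevents the limit arc from touching $S_q$ strictly between the limiting endpoints), and for decreasing radii proves only the much weaker Claim 14: a bend at the limit radius, \emph{if one exists}, must meet $\limsup\beta_n$.

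Your third step has a second genuine hole, and once closedness is lost the whole architecture collapses. The conclusion that no interval lies in $E_{q\vartheta}$ is true, but ``nested non-crossing separators at every radius force a filled sector of an annulus'' is an assertion, not an argument: an uncountable nested family of pairwise disjoint arcs is a union of nowhere dense sets, and you never exhibit the open set it is supposed to sweep out. The actual obstruction is compactness-theoretic and uses exactly the machinery you set aside in your closing remark: (a) a single arc component carries only countably many pairwise disjoint bends, so bends at every $r\in(c,d)$ must occupy at least two distinct components; (b) two components of a compactum can be separated by a clopen partition $X=A\sqcup B$; (c) with $a=\sup$ of radii of bends inside $A$ and $b=\inf$ of radii of bends inside $B$ beyond $a$, Claim 13 produces a bend in $A$ at radius $a$, and Claim 14 shows $a=b$ would force $A\cap B\neq\varnothing$; hence $(a,b)$ is a gap in the radius set, a contradiction. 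Finally, note the structural point: with closedness gone, ``empty interior'' alone proves nothing, since a countable union of sets with empty interior need not be first category (the irrationals have empty interior). What would rescue your plan is the one-sided closure of Claim 13 --- a set closed under increasing limits that is dense in an interval must contain that interval, so Claim 13 together with ``contains no interval'' does yield nowhere dense --- but those two ingredients are exactly the ones your proposal does not correctly establish, and they are in effect the content of the paper's Claims 12 through 15.
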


\noindent Combined with Propositions \ref{bctz} and \ref{bp}, this will show that   $X$ has the Jure property.

Toward proving Lemma 9, fix $p\in X$. Let $\{q_i:i<\omega\}$ and $\{\vartheta_j:j<\omega\}$ be enumerations of the countable sets  $\mathbb Q\cap (0,\infty)$ and  $\mathbb Q\cap [0,2\pi]$, respectively. Let $\vartheta^i_j$ be the point on the circle $S_{q_i}$ at the  angle  $\vartheta_j$ radians. Define $r\in F_{ij}$ if there exists a bend at $S_r$ terminating at $q_i$ and separating $\vartheta^i_j$ from $\infty$. That is, $F_{ij}$ is the set of all $r$'s such that  $\mathfrak B_{q_i r\vartheta_j}\neq\varnothing$.

\begin{ucl}  $$E=\bigcup_{i,j<\omega} F_{ij}.$$\end{ucl}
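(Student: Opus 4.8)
The plan is to prove the set equality $E = \bigcup_{i,j<\omega} F_{ij}$ by establishing the two inclusions separately, where the substance lies entirely in showing $E \subseteq \bigcup_{i,j} F_{ij}$; the reverse inclusion is nearly immediate. For the easy direction, suppose $r \in F_{ij}$ for some $i,j$. By definition of $F_{ij}$ there is a bend $\beta$ at $S_r$ terminating at $S_{q_i}$ that separates $\vartheta^i_j$ from $\infty$. In particular $X$ has a bend at $S_r$, so $r \in E$. Hence $\bigcup_{i,j} F_{ij} \subseteq E$.

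For the harder inclusion, I would fix $r \in E$, so that $X$ has some bend $\beta$ at $S_r$ terminating at $S_q$ for some $q < r$ (the radius $q$ need not be rational). The goal is to produce rational data $(q_i, \vartheta_j)$ witnessing membership in some $F_{ij}$. The natural strategy is a twofold approximation. First I would replace the possibly-irrational terminating radius $q$ by a rational $q_i$: since $\beta \cap S_r \neq \varnothing$ and $\beta \subset A_{rq}$ with $\beta \cap S_q = \{a_\beta, b_\beta\}$, for $q_i$ chosen rational and slightly larger than $q$ (but still less than $r$, and close enough that $\beta$ still meets $S_r$ and reaches $S_{q_i}$), the portion of $\beta$ lying in the annulus $A_{r q_i}$ should again be a bend at $S_r$ terminating at $S_{q_i}$. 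Concretely I would take the component of a point of $\beta \cap S_r$ in $\beta \cap A_{r q_i}$ as the new bend. Second, I would identify a rational angle $\vartheta_j$ and verify the separation condition: the bend $\beta$ (closed off by some arc $\alpha \subset D_{q_i}$ joining its two endpoints on $S_{q_i}$) bounds a region, and I need a point $\vartheta^i_j$ on $S_{q_i}$ lying in the bounded complementary component of $\alpha \cup \beta$. Since this bounded region is open and meets $S_{q_i}$ in a nonempty open arc, it contains a point at a rational angle, giving the required $\vartheta_j$.

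The main obstacle I anticipate is the second step: verifying the \emph{separation} condition $\mathfrak B_{q_i r \vartheta_j} \neq \varnothing$ rather than merely the existence of a bend. The definition demands producing an arc $\alpha \subset D_{q}$ with $\alpha \cap S_q = \{a_\beta, b_\beta\}$ such that $\vartheta$ lies in the bounded component of $\mathbb{R}^2 \setminus (\alpha \cup \beta)$. The delicate point is that $\alpha$ must be taken inside the closed disc $D_{q_i}$ and meet the circle only at the two endpoints of $\beta$, so that $\alpha \cup \beta$ forms a Jordan curve (or at least separates the plane) whose bounded component one can locate. I would likely take $\alpha$ to be a subarc of $S_{q_i}$ itself, namely one of the two arcs of $S_{q_i}$ between $a_\beta$ and $b_\beta$; then $\alpha \cup \beta$ is a simple closed curve bounding a region, and I must argue that at least one choice of subarc yields a bounded region meeting $S_{q_i}$ along an arc containing a rational angle.

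Once the two inclusions are established the Claim follows, and I would note that this is the structural heart of Lemma~9: it expresses the exceptional set $E$ as a countable union of the sets $F_{ij}$, each of which one then argues (in the continuation) to be nowhere dense, yielding that $E$ is first category.
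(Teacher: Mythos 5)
Your overall route is the same as the paper's: the inclusion $\bigcup_{i,j} F_{ij}\subseteq E$ is immediate, and for the reverse inclusion you pass from the (possibly irrational) terminating radius $q$ to a rational $q_i$ strictly between $q$ and $r$, restrict the bend to the annulus $A_{q_i r}$, and then hunt for a rational angle on $S_{q_i}$ inside the region enclosed by the restricted bend. However, two of your concrete steps do not work as written, and both are exactly the points where the paper is more careful.

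First, the restricted bend. You take ``the component of a point of $\beta\cap S_r$ in $\beta\cap A_{r q_i}$,'' i.e.\ a component inside the \emph{closed} annulus. If $\beta$ dips down to touch $S_{q_i}$ at an interior point and returns, that component passes through the touch point, so the resulting arc meets $S_{q_i}$ at points other than its endpoints and is therefore not a bend terminating at $S_{q_i}$ (the definition requires $\beta'\cap S_{q_i}=\{a_{\beta'},b_{\beta'}\}$). The paper avoids this by taking the component $\alpha$ of $x$ in $\beta\cap A_{q_i r}\setminus S_{q_i}$ and setting $\beta'=\overline{\alpha}$, whose intersection with $S_{q_i}$ is exactly its two endpoints. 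Second, and more seriously, your proposed closing arc $\alpha$ --- a subarc of $S_{q_i}$ itself --- violates the constraint you correctly state one sentence earlier: the definition of ``separates $\vartheta$ from $\infty$'' demands $\alpha\subset D_{q_i}$ with $\alpha\cap S_{q_i}=\{a_{\beta'},b_{\beta'}\}$, and a subarc of the circle meets the circle in a whole arc, not just two points; the hedge ``or at least separates the plane'' does not help, since membership in $\mathfrak B_{q_i r\vartheta_j}$ is defined by that exact condition. The repair is easy and is in effect what the paper does: let $V$ be the bounded component of $\mathbb R^2\setminus(D_{q_i}\cup\beta')$, note that $\partial V\setminus\beta'$ is a nonempty open arc of $S_{q_i}$ and pick $\vartheta^i_j$ there; then an admissible $\alpha$ (for instance the straight chord from $a_{\beta'}$ to $b_{\beta'}$) gives a Jordan curve $\alpha\cup\beta'$ whose bounded complementary component contains $V$, the open arc $\partial V\setminus\beta'$, and the piece of the disc cut off by $\alpha$ on that side --- in particular it contains $\vartheta^i_j$. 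With these two corrections your argument coincides with the paper's proof.
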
  

\begin{proof}Clearly $E$ contains every $F_{ij}$. For the other inclusion, let $r\in E$. Let $\beta\subset A_{qr}$ be a bend at $S_r$ terminating at $S_q$. Choose $i$ so that $q_i$ is between $q$ and $r$. 
  Let $x\in \beta\cap S_r$, and let $\alpha$ be the component of $x$ in $\beta\cap A_{q_i r}\setminus S_{q_i}$.  Then $\beta'=\overline\alpha$ is a bend at $S_r$ terminating at $S_{q_i}$. Let $V$ be the bounded component of $\mathbb R^2\setminus (D_{q_i}\cup \beta')$. There exists $j$ such that  $\vartheta^i_j\in  \partial V\setminus \beta'$. Then $\beta'$ separates $\vartheta^i_j$ from $\infty$ and witnesses   $r\in F_{ij}$. \end{proof}

A countable sum of first category spaces is again first category. So to prove Lemma 9, it now suffices to show that every $F_{ij}$ is a first category subset of $(0,\infty)$. This will be accomplished with the following.

\begin{ul}Let $X$ be a fence in $\mathbb R^2$. For every $p\in X$, $q\in (0,\infty)$, and $\vartheta\in S_q(p)$,  $$F=\{r\in (0,\infty):\mathfrak B_{qr\vartheta}\neq\varnothing\}$$
is first category in $(0,\infty)$.\label{kl}\end{ul}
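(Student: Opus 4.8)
\textbf{Proof plan for Lemma \ref{kl}.}

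The plan is to fix $p$, $q$, and $\vartheta$ as in the statement and prove that $F$ is first category by showing directly that $F$ is \emph{nowhere dense}—indeed I expect $F$ to be not merely meager but closed with empty interior, so establishing these two properties suffices. The overall strategy is to exploit the fact that a fence has no nondegenerate continua other than arcs: if $F$ contained an interval, or failed to be closed, we could assemble the associated family of bends into a continuum that is forced to be two-dimensional (it would fill a region of the annulus), contradicting that $X$ is a fence embedded in the plane.

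First I would argue that $F$ has empty interior. Suppose toward a contradiction that $F$ contains a nondegenerate interval $(r_0,r_1)$ with $q<r_0$. For each $r$ in this interval pick a bend $\beta_r\in\mathfrak B_{qr\vartheta}$; each $\beta_r$ meets $S_r$, lies in $A_{qr}$, and separates the fixed point $\vartheta$ from $\infty$ together with an arc in $D_q$. The separation condition is the crucial geometric leverage: because each $\beta_r$ together with an arc in $D_q$ encloses $\vartheta$, any two such bends at different radii must be ``nested'' in a way that forces the union $\bigcup_{r\in(r_0,r_1)}\beta_r$ to sweep out a two-dimensional subset of the annulus $A_{qr_1}$. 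Making this precise is where I would use Proposition \ref{bctz}: for a dense $G_\delta$ set of radii $s\in(r_0,r_1)$ the circle $S_s$ meets $X$ in a $0$-dimensional set, yet the nesting/separation property should force $S_s\cap X$ to contain an arc (the bends at radii above $s$ must cross $S_s$ to reach outward and enclose $\vartheta$ consistently), giving the contradiction.

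Next I would show $F$ is closed, using Proposition \ref{lsup}. Take $r_n\in F$ with $r_n\to r$, and bends $\beta_n\in\mathfrak B_{q r_n\vartheta}$. Choosing a point $x_n\in\beta_n\cap S_{r_n}$, we have $x_n$ converging (after passing to a subsequence) to a point of $S_r$, so $\limsup\beta_n$ is a continuum $B$ by Proposition \ref{lsup}. I would check that $B\subset A_{qr}$, that $B$ meets $S_r$, and—using that the separation of $\vartheta$ from $\infty$ is a closed condition under limits (the enclosing arcs in $D_q$ also converge to an enclosing arc)—that $B$ still separates $\vartheta$ from $\infty$. Since $B$ is a subcontinuum of the fence $X$ it is an arc or a point; a point cannot separate $\vartheta$ from $\infty$ in the plane, so $B$ contains a bend witnessing $r\in F$. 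This is the step I expect to be the main obstacle, because $\limsup\beta_n$ need not be an arc with the right endpoint structure—it could be a larger arc or could fail to terminate cleanly at $S_q$—so I would need to extract from $B$ an actual bend terminating at $S_q$, presumably by taking the appropriate subarc of $B$ cut off at its first return to $S_q$ and verifying that this subarc inherits both the bend property and the separation of $\vartheta$.

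Combining the two parts, $F$ is closed with empty interior, hence nowhere dense and in particular first category in $(0,\infty)$, completing the proof.
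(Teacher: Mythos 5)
Your plan hinges on two assertions---that $F$ is closed and that $F$ has empty interior---and both supporting arguments have genuine gaps; the first assertion is in fact false. The obstacle you flag yourself in the closedness step is fatal rather than technical: when $r_n\searrow r$, the continuum $\limsup\beta_n$ can pass through the point $\vartheta$ itself. Concretely, let $X$ consist of pairwise disjoint bends $\beta_n\in\mathfrak B_{qr_n\vartheta}$ whose endpoints $a_{\beta_n}$ increase to $\vartheta$, and which converge in the Hausdorff metric to an arc $\kappa\subset A_{qr}$ having $\vartheta$ as an endpoint and reaching $S_r$; then $X=\kappa\cup\bigcup_n\beta_n$ is a fence and each $r_n\in F$, but $\mathfrak B_{qr\vartheta}=\varnothing$. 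Indeed, each $\beta_n$ meets $S_q$ only at its two endpoints and reaches radius $r_n>r$, so no subarc of any $\beta_n$ is a bend at $S_r$; the only bend at $S_r$ in $X$ is $\kappa$ itself, and since $\vartheta\in\kappa$, it cannot separate $\vartheta$ from $\infty$. This is exactly why the paper proves closure of $F$ only under \emph{increasing} limits (Claim \ref{ulim}), while for decreasing sequences it proves only the much weaker Claim \ref{llim} (any bend at the limit radius must \emph{meet} $\limsup\beta_n$), which is then used for a different purpose. Since ``closed plus empty interior'' is your entire route to nowhere density, the gap cannot be patched locally: you would have to control $\overline F$ directly.

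The empty-interior step is also unsupported. Pairwise disjoint nested bends at uncountably many radii do not sweep out a two-dimensional set, and do not force any intermediate circle $S_s$ to meet $X$ in an arc: a ``Cantor fence'' of nested bends, one for each radius in a Cantor set and varying continuously, meets each intermediate circle in a Cantor set, which is $0$-dimensional. Each bend at a radius above $s$ contributes only a $0$-dimensional trace to $S_s\cap X$, and an uncountable union of such traces can still be $0$-dimensional, so Proposition \ref{bctz} yields no contradiction along these lines. What actually limits $F$ in the paper is the fence structure, used twice: (i) since $X$ contains no triods, bends at different radii are disjoint off $S_q$ and have nested endpoints (Claim \ref{disb}), so a single arc component of $X$ can carry only countably many bends; and (ii) if bends occur in two different components, one writes $X=A\cup B$ with $A,B$ disjoint closed sets separating those components and combines Claims \ref{ulim} and \ref{llim} to produce an entire interval of radii missing $F$. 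Note finally that the paper does not prove $F$ nowhere dense at all: it decomposes $F\cap(q,\infty)$ into a countable set (radii whose bends live in a common arc component) plus a nowhere dense set, which suffices for first category. Your stronger claim may be true, but establishing it would require the component-counting and clopen-separation arguments just described, not a dimension-theoretic one.
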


\subsection{Proof of Lemma 11}
Fix $p\in X$, $q\in (0,\infty)$ and $\vartheta\in S_q$. To prove that $F$ is first category, it suffices to show that $F\cap (0,q)$ and $F\cap (q,\infty)$  are each first category. We  will proceed with the latter, and note that symmetric arguments would apply to  $F\cap(0,q)$.

Since $X$ is non-separating, there is a ray $\gamma\subset \mathbb R^2\setminus X$ that goes from a single point of $S_q$ to $\infty$.   There is a homeomorphism $H:\mathbb R^2\setminus (U_q\cup \gamma)\to \mathbb H$ onto the closed half-plane $\mathbb H=\{\langle x,y\rangle\in \mathbb R^2:y\geq 0\}$ such that $H[S_q\setminus \gamma]= \mathbb R\times \{0\}$ and $H(\vartheta)=0$. This    establishes a linear ordering of $S_q\cap X$ by real numbers on the $x$-axis. For any two points $a,b\in S_q\cap X$, we will say  $a\prec b$ if $H(a)<H(b)$.  By re-labeling the endpoints of $\beta$ we can assume $a_\beta \prec b_\beta$ for all $\beta\in \mathfrak B_{qr\vartheta}$. 

Observe that $\beta$ separates $\vartheta$ from $\infty $ if and only if $a_\beta \prec \vartheta\prec b_\beta$.

\begin{ucl}[Bends at different radii, ordering of endpoints]\label{disb}Let $r,r'\in (q,\infty)$. If  $r<r'$, $\beta\in \mathfrak B_{qr\vartheta}$ and $\beta'\in \mathfrak B_{qr'\vartheta}$, then 
\begin{itemize}
\item[(i)] $\beta\cap \beta'\setminus S_q=\varnothing$; and
\item[(ii)]  $a_\beta'\preceq a_{\beta}$ and $b_\beta\preceq b_{\beta'}$.
\end{itemize}\end{ucl}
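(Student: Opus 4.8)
My plan is to prove the two statements separately, extracting (i) from the fence structure and deriving (ii) from a planar Jordan-curve separation argument that uses (i).

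For part (i), I would show that two bends reaching different radii can meet only on $S_q$ precisely because $X$ is a fence. Suppose $x\in\beta\cap\beta'$ with $x\notin S_q$. Then $\beta\cup\beta'$ is connected, so $\beta$ and $\beta'$ lie in a single component $C$ of $X$, which is an arc. Viewing $C$ as a homeomorph of $[0,1]$, the subarcs $\beta,\beta'$ correspond to closed subintervals, so $J:=\beta\cap\beta'$ is again a subarc or a single point, and each endpoint of $J$ is an endpoint of $\beta$ or of $\beta'$, hence lies on $S_q$. Since $\beta\cap S_q=\{a_\beta,b_\beta\}$, the endpoints of $J$ lie in $\{a_\beta,b_\beta\}$; were $J$ nondegenerate this would force $J=\beta$, and symmetrically $J=\beta'$, so $\beta=\beta'$ — impossible, as $\beta\subseteq A_{qr}$ has all radii at most $r$ while $\beta'$ meets $S_{r'}$ with $r'>r$. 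Hence $J$ is a single point of $S_q$, contradicting $x\notin S_q$. This proves (i), and in fact shows $\beta\cap\beta'\subseteq S_q$.

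For part (ii), I would work with the Jordan curve $\Gamma=\alpha\cup\beta$, where $\alpha\subseteq D_q$ is the arc furnished by the hypothesis that $\beta$ separates $\vartheta$ from $\infty$; thus $\vartheta$ lies in the bounded complementary component of $\Gamma$, and $\Gamma\cap S_q=\{a_\beta,b_\beta\}$. The points $a_\beta,b_\beta$ split $S_q$ into the open arc containing $\vartheta$, which is connected and disjoint from $\Gamma$ and so lies in the bounded component, and a complementary open arc. Because $\Gamma\subseteq D_r$ while the point $z'\in\beta'\cap S_{r'}$ has radius $r'>r$, the point $z'$ lies in the unbounded component. The open arc $\beta'\setminus\{a_{\beta'},b_{\beta'}\}$ has all radii exceeding $q$, so it misses $\alpha\subseteq D_q$, and by part (i) it misses $\beta$; being connected and containing $z'$, it therefore lies entirely in the unbounded component. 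Consequently both feet $a_{\beta'},b_{\beta'}$ lie in the closure of the unbounded component, so each lies on the non-$\vartheta$ arc of $S_q$ or equals $a_\beta$ or $b_\beta$; in terms of $\prec$ this says $a_{\beta'}\preceq a_\beta$ or $b_\beta\preceq a_{\beta'}$, and likewise for $b_{\beta'}$. Combining with $a_{\beta'}\prec\vartheta\prec b_{\beta'}$ eliminates the wrong alternatives and yields $a_{\beta'}\preceq a_\beta$ and $b_\beta\preceq b_{\beta'}$.

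I expect the main obstacle to be the planar bookkeeping in part (ii): confirming that the $\vartheta$-side arc of $S_q$ genuinely lies in the bounded component of $\mathbb R^2\setminus\Gamma$ while $z'$ lies in the unbounded one, and translating the phrase ``lies on the non-$\vartheta$ arc of $S_q$'' faithfully into the order $\prec$ when some of the feet coincide. By contrast, part (i) is essentially combinatorial once the fence structure is invoked; its only delicate point is that two subarcs of an arc intersect in a connected set whose endpoints are endpoints of the two subarcs.
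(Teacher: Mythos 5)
Your proposal is correct. On part (ii) it is essentially the paper's argument: the paper likewise places the interior of $\beta'$ in the unbounded complementary component --- of $\mathbb R^2\setminus(D_q\cup\beta)$ rather than of your Jordan curve $\alpha\cup\beta$ --- and reads the ordering of the feet off from that; your extra check that $\beta'\setminus\{a_{\beta'},b_{\beta'}\}$ misses $\alpha$ (radii $>q$ versus $\alpha\subset D_q$) is the small price for working with $\alpha\cup\beta$, and the translation step you flagged as delicate is handled exactly as you suggest: since $\beta$ separates $\vartheta$ from $\infty$, the paper's observation gives $a_\beta\prec\vartheta\prec b_\beta$, which forces the cut point of $S_q$ determined by $\gamma$ onto the non-$\vartheta$ arc, so under $H$ the $\vartheta$-arc maps onto the interval $(H(a_\beta),H(b_\beta))$ and ``on the non-$\vartheta$ arc or equal to a foot of $\beta$'' becomes precisely $\preceq a_\beta$ or $\succeq b_\beta$. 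Part (i) is where you genuinely diverge. The paper runs a triod argument: take $x\in\beta'\cap S_{r'}$ (so $x\notin\beta$), let $\xi$ be the subarc of $\beta'$ from $x$ to the first point $z$ of $\beta$ encountered, and note that $\beta\cup\xi$ is a triod, impossible inside an arc component of a fence. You instead use interval combinatorics in the arc component $C$: two subarcs of an arc meet in a point or a subarc whose endpoints are endpoints of the original subarcs, which forces $\beta\cap\beta'$ to be a single point of $S_q$ unless $\beta=\beta'$, and the latter is killed by the radii. Both hinge on the same structural fact (components of a fence are arcs), but yours trades the no-triod property for uniqueness of the subarc of $C$ with prescribed endpoints, and it sidesteps a point the paper leaves implicit: in the triod version one should take the connecting arc inside $\beta'$ so that the first-hit point $z$ is interior to $\beta$ (its being an endpoint would spoil the triod), which follows since the subarc of $\beta'$ from $x$ to $y$ contains no endpoint of $\beta'$ and hence avoids $S_q$. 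Either way the claim stands; your version is, if anything, slightly more self-contained.
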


\begin{proof}For a contradiction, suppose that $\beta$ and $\beta'$ intersect outside of $S_q$. Let $x\in \beta'\cap S_{r'}$ and $y\in \beta\cap \beta'\setminus S_q$. Note that $x\notin \beta$. Let $\alpha\subset\beta\cup \beta'$ be an arc with endpoints $x$ and $y$. Let $z$ be the first point of $\alpha$ that belongs to $\beta$, as $\alpha$ is traversed going from $x$ to   $y$. Let $\xi\subset \alpha$ be the arc from $x$ to $z$. Then $\beta\cup \xi$ is a triod in $X$, which is a contradiction. This proves (i). As for (ii), note that  $x$ (and in fact all of $S_{r'}$) lies in the unbounded component of $\mathbb R^2\setminus (D_q\cup \beta)$. Since $\beta$ and $\beta'$ each separate $\vartheta$ from $\infty$, and $x\in \beta '$, by (i) it must connect $x$ to $S_q$ at $a_{\beta'}\preceq a_{\beta}$ and $b_{\beta'}\succeq b_\beta$. \end{proof}

\begin{ucl}[Upper limit of bends]Let $r_1,r_2,\ldots$ be an increasing sequence in $(q,\infty)$. If $r_n\to r$ 
  and $\mathfrak B_{qr_n\vartheta}\neq\varnothing$ for each $n$, then  $\mathfrak B_{qr\vartheta}\neq\varnothing$.\label{ulim}\end{ucl}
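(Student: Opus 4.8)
The plan is to realize the desired bend as a limit superior of the given bends and then exploit the fence structure of $X$. For each $n$ choose $\beta_n\in\mathfrak B_{qr_n\vartheta}$ and a point $x_n\in\beta_n$ at which $\beta_n$ attains its maximum distance from $p$, so that $x_n\in S_{r_n}$. Since the closed annulus $A_{qr}$ is compact and every $\beta_n\subset A_{qr_n}\subset A_{qr}$, I may pass to a subsequence along which $x_n\to x$; as $|x_n-p|=r_n\to r$, this gives $x\in S_r$. Using compactness of $S_q$ I refine the subsequence further so that $a_{\beta_n}\to a$ and $b_{\beta_n}\to b$ in $S_q$. Claim~\ref{disb}(ii) shows that $(a_{\beta_n})$ is $\preceq$-decreasing and $(b_{\beta_n})$ is $\preceq$-increasing, so $a\preceq a_{\beta_1}\prec\vartheta$ and $\vartheta\prec b_{\beta_1}\preceq b$; since $\prec$ is induced by the continuous coordinate $H$, taking limits yields $a\prec\vartheta\prec b$. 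By Proposition~\ref{lsup}, $L:=\limsup\beta_n$ is a continuum, and since each $\beta_n$ lies in the closed set $A_{qr}$ we have $L\subset A_{qr}$, with $x,a,b\in L$ and $x\in S_r$. As $X$ is a fence, the nondegenerate continuum $L$ lies in a single component of $X$ and is therefore an arc.

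Next I would carve the bend out of $L$. Let $C$ be the component of $x$ in the relatively open subset $L\setminus S_q$ of the arc $L$, and set $\beta:=\overline C$. Then $\beta\subset A_{qr}$, $x\in\beta\cap S_r$, and each endpoint of $\beta$ is either a point of $S_q$ or an endpoint of $L$. To control these endpoints and the separation I would transport everything by the homeomorphism $H$ onto the half-plane $\mathbb H$ fixed above; since $\gamma\cap X=\varnothing$, all of $L$ and each $\beta_n$ lie in the domain of $H$, so $H(L)=\limsup H(\beta_n)$. The images $H(\beta_n)$ are arcs meeting the $x$-axis $H[S_q\setminus\gamma]$ exactly in their endpoints $H(a_{\beta_n})<0<H(b_{\beta_n})$; by Claim~\ref{disb}(i) they are pairwise disjoint off the $x$-axis, and by the monotonicity of their feet they form a nested family of arches over the origin $H(\vartheta)=0$. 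Each arch, together with the $x$-axis segment joining its feet, bounds a Jordan region $R_n$ containing $0$, and the nesting gives $R_1\subset R_2\subset\cdots$. The tops $H(x_n)$ lie on $\partial R_n$ and converge to $H(x)$, so $H(\beta)$ is precisely the portion of $H(L)$ that bounds $\bigcup_n R_n$ from above over the origin; this forces both endpoints of $\beta$ onto $S_q$, on opposite sides of $\vartheta$, whence $a_\beta\prec\vartheta\prec b_\beta$. Thus $\beta$ is a bend at $S_r$ terminating at $S_q$ that separates $\vartheta$ from $\infty$, i.e. $\beta\in\mathfrak B_{qr\vartheta}$.

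The main obstacle is exactly this last verification: that neither endpoint of $\beta$ is a ``free'' endpoint of $L$ lying in the open annulus $A_{qr}\setminus S_q$, and that the two endpoints straddle $\vartheta$ in the order $\prec$. I expect to settle both points simultaneously from the nested Jordan regions $R_n$: because $0$ lies in every $R_n$ while the feet of the arches converge to $H(a)<0<H(b)$, the upper boundary of $\bigcup_n R_n$ near the origin is an arch joining a negative to a positive point of the $x$-axis, and an endpoint of $L$ interior to the annulus would have to be an accumulation point of arch interiors and hence could not terminate the arc. Making these $\limsup$ statements precise, using only the disjointness and nesting supplied by Claim~\ref{disb}, is the delicate part of the argument; the symmetric conclusion for $F\cap(0,q)$ requires no new ideas.
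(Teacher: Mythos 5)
Your first half runs parallel to the paper's proof: both arguments choose $\beta_n\in\mathfrak B_{qr_n\vartheta}$, use Claim~\ref{disb}(ii) to get monotone feet $a_n\to a$, $b_n\to b$, and use Proposition~\ref{lsup} plus the fence property to conclude that $L=\limsup\beta_n$ is an arc in $A_{qr}$ containing $a$ and $b$. The divergence is in how the bend is extracted from $L$. The paper takes the subarc of $L$ between the \emph{last} point of $L\cap S_q$ with $h(t)\preceq a$ and the \emph{first} point with $h(t)\succeq b$; by construction its endpoints lie on $S_q$ and straddle $\vartheta$, and the remaining work is to show it reaches $S_r$ (the ``it has to go around every $\beta_n$'' Jordan-curve argument). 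You instead take $\beta=\overline C$, the closure of the component of the top point $x$ in $L\setminus S_q$; by construction $\beta$ meets $S_r$ and meets $S_q$ at most at its endpoints, and the remaining work is to show those endpoints lie on $S_q$, one on each side of $\vartheta$. So each approach trivializes the step the other must prove, and you have shifted all of the difficulty onto exactly the step you leave open.

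That open step is a genuine gap, not a routine verification, and the one concrete argument you offer for it is invalid. You claim an endpoint of $L$ interior to the annulus ``would have to be an accumulation point of arch interiors and hence could not terminate the arc.'' But \emph{every} point of $L=\limsup\beta_n$ is an accumulation point of points of the $\beta_n$ --- that is what $\limsup$ means --- so this property cannot rule anything out. Indeed, $L$ can have an endpoint off $S_q$: the nested arches can wrap around a ``slit'' hanging off the foot $a$, so that $L$ is an arc running from a tip $e$ in the open annulus, down the slit to $a$, and then along an outer arch to $b$; the tip $e$ is an endpoint of $L$ and is an accumulation point of arch interiors. (In this example $\overline C$ happens to be the outer arch and all is well, but it refutes your stated reasoning.) To close your argument you would need to prove (i) that $\overline C$ cannot terminate at such a tip, and (ii) that its two $S_q$-endpoints cannot both satisfy $\succeq b$ or both $\preceq a$; both assertions appear to be true, but they require real planar topology --- e.g.\ showing that $W=\bigcup_nR_n$ is a complementary component of $L\cup S_q$ whose boundary contains all of $L\setminus S_q$, that tips are slits surrounded by $W$, and a Jordan-curve case analysis --- none of which follows from the phrase ``bounds $\bigcup_nR_n$ from above over the origin,'' which is not a defined notion. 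You also never establish the fact (implicit in the paper's proof) that $L\cap S_q$ contains no point strictly between $a$ and $b$, which your straddling claim would in any case need. As it stands, the proposal is an honest but incomplete sketch whose missing step is the heart of the claim; the paper's choice of subarc is precisely what makes that step avoidable.
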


\begin{proof}For each $n$ let $\beta_n\in \mathfrak B_{qr_n\vartheta}$. Write $a_{\beta_n}$ and $b_{\beta_n}$ simply as $a_n$ and $b_n$. By Claim \ref{disb}(ii), along $S_q$ the sequence $a_n$ is decreasing and $b_n$ is increasing. 
   By compactness of $X$ these sequences are bounded away from $\gamma$ along $S_q$. Thus $a_n$ and $b_n$ converge to two points $a$ and $b$ in $S_q\cap X$ which satisfy  $a\preceq a_n$ and $b_n\preceq b$ for all $n$.  Let $\kappa=\limsup \beta_n$. Then $a,b\in \kappa\subset A_{qr}$, and by  Proposition \ref{lsup}  $\kappa$ is a continuum in $X$  (hence an arc).  Let $h:[0,1]\to \kappa$ be a homeomorphism such that $h(0)=a$ and $h(1)=b$.  Let 
\begin{align*}t_0&=\sup\{t\in [0,1]:h(t)\in S_q\text{ and } h(t)\preceq a\}\text{, and}\\ 
t_1&=\inf\{t\in [0,1]:h(t)\in S_q\text{ and } h(t)\succeq b\}.\end{align*}
 Then  $\beta=h([t_0,t_1])$ intersects $S_q$ at only two points $c=h(t_0)$ and $d=h(t_1)$. We have  $c\preceq a\preceq a_n$ and $b_n\preceq b\preceq d$ for all $n$. Thus $\beta$ must intersect $S_r$ because it has to go around every $\beta_n$ (into $A_{r_nr}$). Further,  $c\preceq a_0$ and $b_0\preceq d$ shows that $\beta$ separates $\vartheta$ from $\infty$. We conclude that $\beta\in \mathfrak B_{qr\vartheta}$.\end{proof}

\begin{ucl}[Lower limit of bends] Let $r_1,r_2,\ldots$ be a decreasing sequence in $(q,\infty)$.  If $r_n\to r$, $\beta_n\in \mathfrak B_{qr_n\vartheta}$ for each $n$, and $\beta\in \mathfrak B_{qr\vartheta}$, then  $\beta\cap \limsup \beta_n \neq\varnothing$. 
\label{llim}\end{ucl}

\begin{proof}By Claim \ref{disb}, $  a_n\preceq a_{n+1}\preceq a_\beta$ and $b_\beta\preceq b_{n+1}\preceq b_n$. Thus $a_n$ and $b_n$ converge in $S_q\cap X$ to two points $a$ and $b$ with  $a\preceq a_\beta$ and $ b_\beta\preceq b$. 
 By Proposition \ref{lsup} and convergence of $a_n$,  $\kappa=\limsup \beta_n$ is a continuum in $X$. Clearly $a,b\in \alpha$  and  $\kappa\subset A_{qr}$.   Since  $a\preceq a_\beta$ and $ b_\beta\preceq b$, we have that   $\beta\cup \gamma$ separates $a$ from $b$ in  the annulus $A_{qr}$. Hence  $ \beta\cap \kappa\neq\varnothing$.\end{proof}

\begin{ucl}Let $I$ be any interval of $(q,\infty)$. Then  $F\cap I$ is countable or  not dense in $I$.  \end{ucl}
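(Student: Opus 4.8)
The goal is to show that $F \cap I$ is either countable or fails to be dense in the interval $I$. The natural approach is a dichotomy argument driven by the two one-sided limit claims already established. The plan is to assume $F \cap I$ is dense in $I$ and show that this forces $F \cap I$ to be countable, which yields the claim. The key tool will be Claims \ref{ulim} and \ref{llim}, which control how bends behave under increasing and decreasing limits of radii, respectively.

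First I would analyze the closure structure of $F$ inside $I$. Claim \ref{ulim} says that if an increasing sequence $r_n \in F$ converges to $r$, then $r \in F$ as well; so $F$ is closed ``from below'' under increasing limits. This means that the failure of $F$ to be closed can only occur at points approached strictly from above. If $F \cap I$ is dense in $I$, then $\overline{F} \supseteq I$, and combined with the upward-limit closure I would argue that $F$ must contain a large closed set --- in the dense case, I expect to deduce that $F$ actually contains an interval, or at least a set whose complement in $I$ is countable. The cleanest route is: assume density, and suppose for contradiction that $F \cap I$ is uncountable in a way that is incompatible with the limit claims; then extract a point $r \in I$ and a monotone sequence $r_n \to r$ realizing a configuration forbidden by Claim \ref{llim}.

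The heart of the matter is the interaction between the two limit claims. Claim \ref{llim} gives that along a \emph{decreasing} sequence $r_n \to r$ with $r_n \in F$ and $r \in F$, the bend $\beta \in \mathfrak{B}_{qr\vartheta}$ must intersect $\limsup \beta_n$. I expect to use this to bound the ``multiplicity'' of bends at accumulated radii: if too many radii in $F$ cluster from above at a common $r$, the corresponding bends would have to intersect a common continuum in a way that, together with the triod-freeness of $X$ (Claim \ref{disb}(i), the fact that $X$ contains no triod because it is a fence), produces a contradiction. Concretely, the nesting order on endpoints from Claim \ref{disb}(ii) shows bends at different radii are ``parallel'' and disjoint off $S_q$; I would leverage this to show that only countably many distinct radii can occur in a dense configuration, because each bend must carve off a distinct region and the plane admits only countably many disjoint such regions of positive size (a standard separability/disjointness counting argument in $\mathbb{R}^2$).

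\textbf{Main obstacle.} The hardest part will be converting density into countability: specifically, showing that a dense set of bend-radii cannot have uncountably many elements. The limit claims give closure properties but not directly a counting bound, so the crux is to associate to each $r \in F \cap I$ some pairwise-disjoint open region in the plane (using the nesting of bends from Claim \ref{disb} and the separation of $\vartheta$ from $\infty$), so that the separability of $\mathbb{R}^2$ forces countability. I anticipate subtlety in ensuring these regions have pairwise-disjoint interiors (or contain pairwise-disjoint rational points) uniformly across a dense family of radii, and in handling the accumulation behavior at radii approached from above, where Claim \ref{ulim} does not apply and Claim \ref{llim} must be invoked to derive the intersection contradiction with triod-freeness.
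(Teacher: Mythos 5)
There is a genuine gap at the heart of your proposal: the counting mechanism you want does not exist. You plan to attach to each $r\in F\cap I$ a region carved off by its bend and to get countability from ``pairwise disjointness plus separability of $\mathbb R^2$.'' But every bend in $\mathfrak B_{qr\vartheta}$, for every $r$, separates the \emph{same} point $\vartheta$ from $\infty$, so the regions they carve off all contain $\vartheta$ and are \emph{nested}, not disjoint; and while the bends themselves are pairwise disjoint off $S_q$ by Claim \ref{disb}(i), a pairwise disjoint family of arcs in the plane can perfectly well be uncountable (parallel segments, or the blades of a Cantor fan), so separability of $\mathbb R^2$ gives nothing. The true statement in this vicinity is that $[0,1]$ contains only countably many pairwise disjoint nondegenerate subarcs; hence countability can be extracted only when all the bends in question lie in a \emph{single component} of $X$, which is then an arc. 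Your outline never invokes the component structure of the fence, and it cannot be avoided: a fence may consist of a Cantor set's worth of pairwise disjoint nested bends, in which case $F$ really is uncountable, and the correct conclusion of the claim in that situation is ``not dense,'' not ``countable.'' No counting argument can cover that case.

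That case is the other missing half. When two radii $r<r'$ in $F\cap I$ have bends in \emph{different} components of $X$, Claim \ref{llim} together with triod-freeness yields no ``forbidden configuration'': a single snake-like arc can legitimately contain bends at radii $r_n\downarrow r$ and a bend $\beta$ at $r$ with $\beta\cap\limsup\beta_n\neq\varnothing$, exactly as Claim \ref{llim} predicts, so no contradiction arises from those ingredients alone. What the paper actually does is separate the two components by disjoint closed sets $A\supset\beta$ and $B\supset\beta'$ with $A\cup B=X$ (possible since they lie in distinct components of a compact space, \cite[Lemma 1.4.4]{engd}), let $a$ be the supremum of radii at which $A$ contains a bend and $b$ the infimum of radii in $[a,r']$ at which $B$ contains a bend, use Claim \ref{ulim} (plus closedness of $A$) to place a bend of $A$ at radius $a$, and use Claim \ref{llim} (plus closedness of $B$) to see that $a=b$ would force $A\cap B\neq\varnothing$; since every bend lies wholly in $A$ or wholly in $B$, the nonempty interval $(a,b)$ misses $F$, so $F\cap I$ is not dense. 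Note finally that your own first step already signals trouble with your framing: density plus Claim \ref{ulim} makes $F$ contain an interval, hence uncountable, so ``dense $\Rightarrow$ countable'' can only hold vacuously; your real task was to refute density outright in the multi-component case, and that refutation is precisely the clopen-separation gap argument your outline does not contain.
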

\begin{proof}Let  $$Y=\bigcup \bigcup_{r\in F\cap I} \mathfrak B_{qr\vartheta}.$$

\textit{Case 1:} $Y$ is contained in  a single component of $C$ of $X$. Then $C$ is an arc. The interiors  of bends at different radii are disjoint by Claim \ref{disb}, and $C$ contains at most countably many pairwise disjoint  intervals.  Hence  $F\cap I$ is countable.

\textit{Case 2:} $Y$ is not contained in a single component of $X$. Then there exist $r<r'\in F\cap I$ and bends $\beta\in \mathfrak B_{qr\vartheta}$ and $\beta'\in B_{qr'\vartheta}$  that belong to  different components of $X$. By \cite[Lemma 1.4.4]{engd}, $X$ can be written as a union of two disjoint closed sets $A$ and $B$ such that $\beta\subset A$ and $\beta'\subset B$. Let $$a=\sup\{r\in (0,r']:A \text{ contains a member of }  \mathfrak B_{qr\vartheta}\}.$$  By Claim \ref{ulim}, there exists $\beta_a\in B_{qa\vartheta}$.  Let $$b=\inf \{r\in[a,r']:B \text{ contains a member of }  \mathfrak B_{qr\vartheta}\}.$$ 
By Claim \ref{llim},  $a=b$ would imply $A\cap B\neq\varnothing$.  So  $a\neq b$ and $(a,b)$ is a non-empty interval of $I$ missing $F$. Therefore $F$ is not dense in $I$.  \end{proof}

\begin{ucl}$F\cap (q,\infty)$ is a first category subspace of $(0,\infty)$. \end{ucl}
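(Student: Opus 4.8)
The final claim states that $F\cap(q,\infty)$ is first category in $(0,\infty)$. The plan is to deduce this directly from the previous claim (Claim 15), which says that for every interval $I\subset(q,\infty)$, the set $F\cap I$ is either countable or not dense in $I$. I would argue that $F\cap(q,\infty)$ is nowhere dense, which immediately yields first category (a single nowhere dense set is already of first category).

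First I would recall the standard topological fact that a subset $S$ of a space is nowhere dense precisely when its closure has empty interior, equivalently when $S$ is not dense in any nonempty open subinterval. So to show $F\cap(q,\infty)$ is nowhere dense in $(0,\infty)$, it suffices to show that for every open interval $I\subset(q,\infty)$, the set $F\cap I$ fails to be dense in $I$. Fix such an $I$. By Claim 15, either $F\cap I$ is not dense in $I$ — in which case we are done for this $I$ — or $F\cap I$ is countable. The remaining work is to handle the countable case.

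The key observation for the countable case is that Claim 14 (Upper limit of bends) forces $F\cap(q,\infty)$ to be closed in $(q,\infty)$: if $r_n\to r$ is an increasing sequence with each $r_n\in F$, then $\mathfrak B_{qr\vartheta}\neq\varnothing$, so $r\in F$; a symmetric argument using the lower-limit behaviour (together with the already-established ordering properties) handles decreasing sequences approaching $r$ from above, and one checks $F$ is closed from both sides within $(q,\infty)$. Granting that $F\cap(q,\infty)$ is closed, a countable closed subset of an interval $I$ cannot be dense in $I$: a dense-in-itself perfect set is uncountable, so a countable closed set has isolated points and, being closed, is nowhere dense in $I$ (alternatively, by the Baire category theorem a nonempty perfect subset of $\mathbb R$ is uncountable, so if $F\cap I$ were countable and dense, its closure would be an interval which is perfect and uncountable, a contradiction). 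Thus in the countable case too, $F\cap I$ is not dense in $I$.

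Combining both cases, $F\cap I$ is not dense in $I$ for every open subinterval $I\subset(q,\infty)$, so $F\cap(q,\infty)$ is nowhere dense, hence first category in $(0,\infty)$. I expect the main technical point to be establishing that $F\cap(q,\infty)$ is closed in $(q,\infty)$ so that countability upgrades to nowhere density; the upper-limit direction is exactly Claim 14, but I would want to verify carefully that the monotone-sequence arguments of Claims 13 and 14 cover both one-sided limits and hence give genuine closedness. Once closedness is in hand, the category conclusion is immediate from the dichotomy in Claim 15.
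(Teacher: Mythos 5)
There is a genuine gap. Your whole argument rests on the assertion that $F\cap(q,\infty)$ is closed in $(q,\infty)$, and this is both unjustified and false. The upper-limit claim (Claim 13 --- you cite it as Claim 14) gives closure of $F$ only under \emph{increasing} limits $r_n\uparrow r$. There is no ``symmetric argument'' for decreasing limits: the paper's lower-limit claim (Claim 14) carries the hypothesis $\beta\in\mathfrak B_{qr\vartheta}$, i.e.\ it \emph{assumes} $r\in F$ and concludes only that $\beta$ meets $\limsup\beta_n$; it cannot be used to establish $r\in F$, and this asymmetry is essential. Concretely, $F$ can fail to be closed from above. Take $q=1$, $p$ the origin, $\vartheta$ the point of $S_1$ at angle $0$, and for each $n$ let $\beta_n$ consist of the radial segment at angle $-1/n$ from $S_1$ to $S_{2+1/n}$, the arc of $S_{2+1/n}$ from angle $-1/n$ to angle $1+1/n$, and the radial segment at angle $1+1/n$ back to $S_1$. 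These bends are pairwise disjoint, each $\beta_n\in\mathfrak B_{q,\,2+1/n,\,\vartheta}$, and their Hausdorff limit $\kappa$ (legs at angles $0$ and $1$, top on $S_2$) is disjoint from all of them, so $X=\{p\}\cup\kappa\cup\bigcup_n\beta_n$ is a fence. Then $2+1/n\in F$ for every $n$, but $2\notin F$: the only arc of $X\cap A_{12}$ meeting $S_1$ in exactly its two endpoints is $\kappa$, and $\kappa$ has $\vartheta$ itself as an endpoint, so it does not separate $\vartheta$ from $\infty$. Thus your route from ``countable'' to ``not dense'' collapses; note also that Claim 15 by itself cannot yield nowhere density, since its countable alternative is not, on its face, incompatible with being dense.

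The paper sidesteps closedness entirely and proves only first category, which is all Lemma 11 needs. It lists the non-degenerate components $I_n$ of $\overline{F\cap(q,\infty)}$; on each $I_n$ the set $F\cap I_n$ is automatically dense (interior points of $I_n$ are limits of points of $F$ lying in $I_n$), so the dichotomy of Claim 15 forces $F\cap I_n$ to be countable; the remaining part $M$ of $F$, lying in degenerate components of the closure, is nowhere dense; and a countable set united with a nowhere dense set is first category. If you insist on a nowhere-density conclusion, it can likely be rescued, but only with a new ingredient your proposal lacks: for instance, an arc cannot contain infinitely many subarcs with pairwise disjoint interiors all of diameter at least $\delta>0$ (by uniform continuity of a parametrization), which upgrades ``countable'' to ``finite'' in Case 1 of Claim 15 on intervals bounded away from $q$. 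As written, however, your proof is not repairable without replacing its central step.
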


\begin{proof}Let $I_1,I_2,\dots$ be the non-degenerate components of $\overline{F\cap (q,\infty)}$.  Let $Q_n=F\cap I_n$ and $M=F\cap (q,\infty)\setminus \bigcup_{n=1}^\infty I_n$. By Claim 15 each $Q_n$ is countable, and clearly $M$ is nowhere dense in  $(0,\infty)$. Put $Q=\bigcup_{n=1}^\infty Q_n$. Then $F\cap (q,\infty)=Q\cup M$ is the union of a countable set and a nowhere dense set, which shows that it is first category.\end{proof}

The preceding can be obtained for $F\cap (0,q)$ with symmetric arguments. It follows that all of $F$ is first category. This completes the proof of Lemma 11. 

We are now ready for the main results. 

\subsection{The Jure property in fences}

\begin{ut}$X$ has the Jure property.\end{ut}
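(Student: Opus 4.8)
The plan is to assemble the pieces already established into a clean argument that $X$ admits a basis of pierced open sets. The key observation is that, by Proposition~\ref{bp}, whenever a radius $r$ has the property that $S_r(p)\cap X$ is $0$-dimensional \emph{and} $X$ has no bend at $S_r(p)$, the ball $U_r(p)\cap X$ is automatically pierced. So the task reduces to showing that, around every point $p\in X$ and at arbitrarily small scales, such ``good'' radii $r$ exist. This is precisely where Propositions~\ref{bctz} and Lemma~9 combine: the first guarantees that the set $Z$ of radii giving $0$-dimensional circle-sections is residual (dense $G_\delta$) in $(0,\infty)$, and the second (Lemma~9) guarantees that the set $E$ of ``bending'' radii is first category. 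Since $(0,\infty)$ is a Baire space, the complement $Z\setminus E$ is itself residual, hence dense; in particular it meets every interval $(0,\varepsilon)$.

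First I would fix an arbitrary $p\in X$ and an arbitrary $\varepsilon>0$, with the goal of producing a pierced open neighborhood of $p$ of diameter less than $\varepsilon$. Applying Proposition~\ref{bctz} to the compact $1$-dimensional set $X$ (noting that fences are $1$-dimensional, or handling the trivial $0$-dimensional case separately) yields that $Z$ is dense $G_\delta$ in $(0,\infty)$. Applying Lemma~9 yields that $E$ is first category, so $(0,\infty)\setminus E$ is residual. Intersecting, $Z\cap\big((0,\infty)\setminus E\big)$ is residual in $(0,\infty)$, and therefore dense. Choosing any $r$ in this dense set with $0<r<\varepsilon/2$, we simultaneously have that $S_r(p)\cap X$ is $0$-dimensional and that $X$ has no bend at $S_r(p)$. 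By Proposition~\ref{bp}, $U_r(p)\cap X$ is a pierced open set, and it is contained in the $\varepsilon$-ball about $p$, hence has diameter at most $\varepsilon$ (and it contains $p$ since $r>0$).

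Finally I would note that the collection of all such sets $U_r(p)\cap X$, ranging over $p\in X$ and over good radii $r$, forms a basis for $X$: given any open $W\ni p$, choose $\varepsilon$ so that the $\varepsilon$-ball about $p$ lies in $W$, then the construction above produces a pierced open set $U_r(p)\cap X\subset W$ containing $p$. Since every member of this collection is pierced, $X$ has a basis of pierced open sets, which is exactly the Jure property.

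I do not expect any single step here to be a genuine obstacle, since the substantive work has already been done in Lemma~9 and Propositions~\ref{bctz} and~\ref{bp}; the only point requiring minor care is the degenerate situation where $X$ is $0$-dimensional (a fence with no arc components), in which case every point has a basis of clopen sets with empty boundary and the Jure property holds trivially. The genuinely delicate machinery is the Baire-category estimate on bends (Lemma~9, via Lemma~\ref{kl} and its claims); the present theorem is essentially the harvest of that estimate through the Baire category theorem.
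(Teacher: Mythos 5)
Your proposal is correct and follows essentially the same argument as the paper: fix $p$ and $\varepsilon$, intersect the dense $G_\delta$-set $Z$ from Proposition~\ref{bctz} with the residual complement of $E$ from Lemma~9 to find a good radius $r<\varepsilon$, and apply Proposition~\ref{bp} to conclude that $U_r(p)\cap X$ is pierced. Your extra remark about the degenerate $0$-dimensional case is a harmless refinement (and in fact unnecessary, since the relevant sets $F_{ij}$ in Proposition~\ref{bctz} are then empty), not a different approach.
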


\begin{proof} Let $p\in X$ and let $U\subset X$ be any open set with $p\in U$. Let $\varepsilon>0$ such that $U_\varepsilon(p)\cap X\subset U$. Let $Z$ and $E$ be as defined in Proposition \ref{bctz} and Lemma 9. Proposition \ref{bctz} states that $Z$ is a dense $G_\delta$-set, and by Lemma 9  $(0,\infty)\setminus E$ contains a dense $G_{\delta}$-set. The intersection of two dense $G_{\delta}$-sets is dense, so there exists $r\in (0,\varepsilon)\cap Z\setminus E$. By Proposition \ref{bp}, $U_r\cap X$ is pierced. \end{proof}

This completes the proof of Theorem 2.

\section{The Jure property in fans}

\begin{uc}[Corollary 3]\label{pju}Every fan has the Jure property.\end{uc}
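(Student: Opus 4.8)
The plan is to bootstrap from Theorem 17 (every fence is Jure) together with Proposition 1, which tells us that removing an open neighborhood of the vertex from a fan leaves a fence. The key idea is that any open set $U$ in a fan $X$ either avoids the vertex $v$ or contains it, and these two cases can be handled separately. Concretely, I would fix the vertex $v$ and, for a basis, take sets of two kinds: pierced open sets contained in $X \setminus \{v\}$ (obtained from the fence $X \setminus V$ for small neighborhoods $V$ of $v$), and a cofinal family of neighborhoods of $v$ itself that are pierced.

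First I would handle a point $x \neq v$. Given an open $U \ni x$, choose a small closed neighborhood $V$ of $v$ with $x \notin V$ and $X \setminus V \subset U \cup (X \setminus \overline{U})$ arranged so that a genuine open neighborhood of $x$ lies inside the fence $F = X \setminus V$. By Proposition 1, $F$ is a fence, and by Theorem 17 it has the Jure property, so $F$ admits a pierced (in $F$) open set $W \ni x$ with $W \subset U$ and small diameter. The main thing to check here is that piercing in the subspace $F$ transfers to piercing in $X$: since any arc of $X$ through a point of $\partial_X W$ near $x$ stays in $F$ (because $x$ is far from $v$ and $W$ has small diameter), the piercing condition for $W$ as a subset of $F$ is exactly the piercing condition for $W$ as a subset of $X$. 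This gives pierced basic neighborhoods at every non-vertex point.

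The remaining and genuinely delicate case is constructing pierced neighborhoods of the vertex $v$ itself. Here the difficulty is that arcs through boundary points of a candidate neighborhood $U \ni v$ could bend against $\partial U$ even though the ambient space away from $v$ is a fence. My plan is to exploit the radial structure: embed the fence $X \setminus V$ into the plane via Corollary 4, but more robustly, work directly in $X$ using the arc-structure. For each arc $L$ emanating from $v$, parametrize by arclength and consider the neighborhood $U_t = \{x \in X : \rho(x) < t\}$, where $\rho$ measures the distance-along-the-arc from $v$ (well-defined since $X$ is uniquely arcwise connected as a fan). The boundary $\partial U_t$ consists of points at parameter exactly $t$ on their respective arcs, and for most values of $t$ these are genuine crossing (piercing) points rather than bend points. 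The category argument from Lemma 9 and its fence analogue should show that the set of \emph{bad} radii $t$ (where some arc bends at the level set $\rho^{-1}\{t\}$ instead of crossing it) is first category, so a dense set of good radii $t \to 0$ yields arbitrarily small pierced neighborhoods of $v$.

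I expect the main obstacle to be making the level-set argument at the vertex rigorous: the function $\rho$ is continuous but its level sets need not be circles, so Proposition 3 and the bend machinery do not apply verbatim. The cleanest route may be to apply the planar bend argument to $X \setminus V$ for a shrinking sequence of neighborhoods $V_n$ of $v$ and then patch: one chooses a pierced set $W_n$ in each fence $X \setminus V_n$ whose boundary lies entirely outside a fixed ball around $v$, and then defines $U_n = W_n \cup V_n'$ for a suitable smaller $V_n' \subset V_n$ whose boundary falls inside the already-verified piercing region of $W_n$. Verifying that the combined boundary $\partial U_n$ consists only of piercing points — in particular that no arc bends at the seam where $V_n'$ meets $W_n$ — is the crux, and it relies on choosing the radii so that the seam lands at crossing points of the radial bend argument. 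Once this matching is arranged, letting $U_n$ shrink to $\{v\}$ completes the basis at the vertex, and together with the non-vertex case establishes that $X$ has the Jure property.
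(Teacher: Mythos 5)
Your treatment of non-vertex points is correct and is essentially the paper's: after arranging that the closure of the candidate neighborhood misses $v$, one applies Theorem 17 to a fence (the paper uses $X\cap\overline U$, you use $X\setminus V$; by Proposition 1 both are fences), and piercing transfers from the fence to $X$ because the whole verification happens at a positive distance from $v$. The genuine gap is at the vertex, and it is real. Your primary plan --- level sets $U_t=\{x:\rho(x)<t\}$ of a ``distance along the arc'' parameter --- fails more fundamentally than you concede: in a general (non-smooth) fan the legs need not be rectifiable, so arclength is not even defined, and any reasonable substitute for $\rho$ need not be continuous (legs can converge to a set strictly larger than a leg), so $U_t$ need not be open; moreover Lemma 9, Proposition 6 and Proposition 7 are statements about circles in $\mathbb R^2$, and the paper only ever embeds \emph{fences} in the plane, never the fan itself, so there is no ambient planar structure in which a ``bend at a level set of $\rho$'' argument could run. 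Your fallback patching scheme is closer in spirit to what is needed, but you explicitly leave its key step unproved: ``choosing the radii so that the seam lands at crossing points'' is a restatement of what must be shown, not an argument, and you acknowledge it as an unresolved crux. As it stands, the vertex case --- the only part of the corollary not already contained in Theorem 17 --- is not established.

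The paper's vertex argument needs no seam-matching, no shrinking sequence, and no choice of radii. Given open $U\ni v$, choose open $V$ with $v\in V\subset\overline V\subset U$. The set $\partial V$ is compact and misses $v$, so by the non-vertex case each of its points has a pierced open neighborhood contained in $U$; extract a finite subcover $W_1,\ldots,W_n$. Put $W=V\cup W_1\cup\cdots\cup W_n$. Then $W$ is open, $v\in W\subset U$, and since $\overline V=V\cup\partial V\subset W$, the boundary $\partial W$ misses $\overline V$ entirely; consequently every point of $\partial W$ lies on $\partial W_i$ for some $i$, where it is a piercing point of $W_i$, and the paper concludes that $W$ is pierced. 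The single idea you were missing is compactness of $\partial V$: finitely many pierced sets swallow the whole boundary of $V$ at once, so $V$ contributes nothing to $\partial W$ and there is no ``seam.'' (To be fair, your instinct that unions require care is not baseless --- the one step the paper asserts without proof is that a piercing point of $W_i$ lying on $\partial W$ is piercing for the larger set $W$, i.e., that nearby arcs escape $\overline W$ and not merely $\overline{W_i}$ --- but that verification is a local matter at boundaries of the covering sets, far from $v$, and is a far cry from the radial machinery your proposal tries to build.)
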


\begin{proof}Let $X$ be a fan with vertex $v$. Let $p\in X$, and let   $U$ be any open subset of $X$ containing $p$. Assume first that $p\neq v$. By shrinking $U$  we may assume  $v\notin \overline U$.  Applying Theorem 17 to the fence  $X\cap \overline U$ proves that $X$ has local basis of pierced open sets at $p$. In the case of $p=v$, let $V$ be an open set such that $p\in V\subset \overline V\subset U$.  By the previous case and compactness of $\partial V$, we may cover $\partial V$ with finitely many pierced open sets which are contained inside of $U$.  Their union with $V$ is a pierced open set in $X$  which contains $v$ and is contained in $U$. \end{proof}


\begin{thebibliography}{HD}

\bibitem{pans}Iztok Banič, Goran Erceg, Ivan Jelić, Judy Kennedy, Van Nall, \textit{Fans, phans and pans}, arXiv preprint, \url{https://arxiv.org/abs/2504.07267v1} (2025).

\bibitem{bas} G. Basso, R. Camerlo, Fences, their endpoints, and projective Fraïssé theory. Trans.Amer. Math. Soc. 374 (2021), no. 6, 4501--4535.

\bibitem{bin2}R. H. Bing, A homogeneous indecomposable plane continuum, Duke J. of Math.,
15 (1948), 729--742.

\bibitem{bin1}
R. H. Bing, Snake-like continua, Duke J. of Math., 18 (1950), 653--663.

\bibitem{engd}R. Engelking, \textit{Dimension Theory}, Volume 19 of Mathematical Studies, North-Holland, Amsterdam, 1978.

\bibitem{nad}S. B. Nadler Jr., \textit{Continuum Theory: An Introduction}, Pure Appl. Math., vol. 158, Marcel Dekker, Inc., New York, 1992.

\bibitem{mun}J. R. Munkres. Topology (2nd edition). Prentice Hall, Inc., Upper Saddle River, NJ, 2000.

\bibitem{ov1}Lex Oversteegen,   \textit{Fans and embeddings in the plane}, Pacific J. Math. 83 (1979), 495--503.

\bibitem{why}G. T. Whyburn, \textit{Analytic topology}, AMS Colloquium Publications, Volume 28. American Mathematical Society, New York, 1942.

\end{thebibliography}
\end{document}